\documentclass[12pt,english]{amsart}

\usepackage[all,cmtip]{xy}
\usepackage{todonotes}

\usepackage{bbm}
\usepackage{amsmath,amssymb,amscd}
\usepackage{hyperref}
\usepackage{babel}
\usepackage{amstext}
\usepackage{amsmath}
\usepackage{amsfonts}
\usepackage{latexsym}
\usepackage{ifthen}
\usepackage{xypic}
\usepackage[all,cmtip]{xy}
\xyoption{all}
\usepackage{tikz-cd}
\usepackage{graphicx}

\usepackage{enumitem}
\setlist[enumerate]{label=(\thethm.\arabic*), before={\setcounter{enumi}{\value{equation}}}, after={\setcounter{equation}{\value{enumi}}}}

\usepackage{thmtools,bm}
\usepackage{cleveref} 
\usepackage[T1]{fontenc}
\usepackage{newtxtext}
\usepackage{newtxmath}
\usepackage{textcomp}
\theoremstyle{plain}
\newlist{thmlist}{enumerate}{1}
\setlist[thmlist]{wide = 0pt, labelwidth = 2em, labelsep*=0em, itemindent = 0pt, leftmargin = \dimexpr\labelwidth + \labelsep\relax, noitemsep,topsep = 1ex, font=\normalfont, label=(\roman*), ref=\thethm.(\roman{thmlisti})}

\setlist[itemize]{wide = 0pt, labelwidth = 2em, labelsep*=0em, itemindent = 0pt, leftmargin = \dimexpr\labelwidth + \labelsep\relax, noitemsep,topsep = 1ex,} 
\usepackage[text={155mm,251mm},centering, marginparwidth=75pt]{geometry}

\addtotheorempostheadhook[thm]{\crefalias{thmlisti}{thm}}

\addtotheorempostheadhook[lemme]{\crefalias{thmlisti}{lemme}}

\renewcommand{\P}{\mathbb{P}}
\newcommand{\R}{\mathbb{R}}

\newcommand{\Q}{\mathbb{Q}}

\newcommand{\Z}{\mathbb{Z}}

\newcommand{\ep}{\varepsilon}
\renewcommand{\epsilon}{\varepsilon}

\renewcommand{\leq}{\leqslant}
\renewcommand{\geq}{\geqslant}

\newcommand{\vol}{\mathrm{vol}}

\newcommand{\norm}[1]{\left\lVert#1\right\rVert}
\newcommand{\eul}{\genfrac \langle \rangle {0pt}{}}

\newtheorem{thm}{Theorem}[section]
\newtheorem{lemme}[thm]{Lemma}

\newtheorem{proposition}[thm]{Proposition}

\newtheorem{defn}[thm]{Definition}
\newtheorem{cor}[thm]{Corollary}
\theoremstyle{remark}

\newtheorem{remark}[thm]{Remark}

\numberwithin{equation}{thm}

\begin{document}

\title{Invariants of big classes from asymptotic total Betti numbers}

\author{Michele Ancona}
\address{Michele Ancona, Laboratoire J.A. Dieudonn\'e,
UMR CNRS 7351,
Universit\'e C\^ote d'Azur, Parc Valrose,
06108 Nice, Cedex 2, France}
\email{michele.ancona@univ-cotedazur.fr}

\author{Junyan Cao}
\address{Junyan Cao, Laboratoire J.A. Dieudonn\'e,
UMR CNRS 7351,
Universit\'e C\^ote d'Azur, Parc Valrose,
06108 Nice, Cedex 2, France}
\email{junyan.cao@univ-cotedazur.fr}

%\author{Michele Ancona and Junyan Cao}
\maketitle

\begin{abstract} We introduce and study a new numerical invariant of big line bundles on normal complex projective varieties, defined in terms of the total Betti number of hypersurfaces associated with high tensor powers of the line bundle. We prove that this invariant is birational and provides an upper bound for the volume of the line bundle. When the big line bundle admits a rational Zariski decomposition, we show that the invariant coincides with the volume. In particular, this holds when the line bundle is ample or is defined on a projective surface.
We also investigate the analogous problem in the real algebraic setting, proving the existence of real algebraic hypersurfaces associated with high tensor powers of a big line bundle whose real vanishing loci have arbitrarily large total Betti numbers.
\end{abstract}

	\section{Introduction}
The aim of this paper is to introduce a new invariant of big line bundles over complex projective algebraic varieties. This invariant arises from the topology of hypersurfaces defined by high-order tensor powers of the line bundles.  Throughout this text, we use the convention that a projective variety is a complex projective algebraic variety.

\begin{defn}\label{def beta}
Let $X$ be a  projective variety of dimension $n$ and let $L$ be a big line bundle on $X$. We define 
$$\beta (L) := \limsup_{m\to +\infty} \frac{1}{m^{n}} \sup_{Z\in |mL|} b_{*} (Z). $$
Here $b_*$ is the sum of Betti numbers and $|mL|$ is the linear system associated with $mL$.
	\end{defn}
	Here and throughout the rest of the paper, we use the additive notation $mL$ to denote the $m$th tensor power of $L$. 
	Unless otherwise specified, Betti numbers are computed using homology with coefficients in $\mathbb{Q}$. In fact, all the results presented here remain valid over an arbitrary coefficient field $K$, with no changes to the proofs.
  As we will see, the normalization by $m^n$  in the Definition \ref{def beta} ensures that $\beta(L)\in\left]0,+\infty\right[$.	 
%(It looks that we need add one argument to show that it is a limit, not only limit sup. It should be the case when $L$ is big...\michele{pour le moments je mets limsup dans la définition. On peut essayer d'avoir un argument pour mettre seulement lim}). 	
%\subsection*{Properties of the $\beta$-invariant} 
\medskip

\noindent A first result is that the $\beta$-invariant is actually a numerical invariant, namely:
\begin{thm}\label{thm: invariance} Let $X$ be a projective variety. Let $L$ and $L'$ be two big line bundles over $X$. Assume that  $c_1 (L) =c_1 (L')$. Then $\beta (L) =\beta (L')$.  
	\end{thm}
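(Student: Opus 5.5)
The plan is to prove the one-sided inequality $\beta(L')\geq \beta(L)$; the reverse inequality then follows by exchanging $L$ and $L'$. Write $L'=L+P$ with $c_1(P)=0$, so $P$ is numerically trivial and lies in a bounded family (it lies in $\mathrm{Pic}^\tau(X)$). Given $Z\in|mL|$, the goal is to build a divisor in $|(m+k)L'|$, for a fixed $k$ independent of $m$, whose total Betti number is at least $b_*(Z)-O(m^{n-1})$. Since $(m+k)^n/m^n\to 1$, this gives $\beta(L')\geq\beta(L)$.

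\emph{Step 1: a uniform auxiliary divisor.} We have $(m+k)L'=mL+(kL'+mP)$, so it suffices to find some $D_m\in|kL'+mP|$ for every $m$. Since $L'$ is big, write $k_0L'=A+E$ with $A$ ample and $E$ effective. Numerically trivial bundles form a bounded family. By uniform Serre vanishing / global generation over that family (or a Fujita--Matsusaka type statement), there is $c$ such that $cA+Q$ is globally generated, hence effective, for every numerically trivial $Q$. Set $k=ck_0$, so that $kL'+mP=(cA+mP)+cE$ is effective. Take $D_m$ in this linear system and put $Z':=Z+D_m\in|(m+k)L'|$. The key point is that the classes $c_1(D_m)=k\,c_1(L')$ are fixed, so the $D_m$ all lie in a bounded family, namely finitely many components of the Chow variety or Hilbert scheme.

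\emph{Step 2: Mayer--Vietoris.} The supports satisfy $\mathrm{supp}\,Z'=Z\cup D_m$, and $(Z\cup D_m, Z, D_m)$ is a triangulable triad. The exact sequence $H_*(Z\cap D_m)\to H_*(Z)\oplus H_*(D_m)\to H_*(Z\cup D_m)$ gives
\[ b_*(Z)\leq b_*(Z\cup D_m)+b_*(Z\cap D_m). \]
It therefore remains to show $b_*(Z\cap D_m)\leq C m^{n-1}$ with $C$ independent of $m$, $Z$, and the choice of $D_m$.

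\emph{Step 3, the main obstacle: a uniform bound for $b_*(Z\cap D_m)$.} The set $Z\cap D_m$ is the zero locus on the $(n-1)$-dimensional variety $D_m$ of a section of $mL|_{D_m}$. If $Z$ contains some components of $D_m$, those components contribute a set of bounded complexity, since $b_*$ of the members of a bounded family is uniformly bounded (by constructibility and generic topological triviality over a stratification of the parameter space). On the remaining components, $Z\cap D_m$ is a Cartier divisor of class $mL$. I would first try to prove a general lemma: for a bounded family $\{Y_t\}$ of projective varieties of dimension $\leq d$ and a line bundle $L$, one has $b_*(W)\leq Cm^d$ for every divisor $W\in|mL|_{Y_t}|$, with $C$ uniform in $t$. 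This is the same kind of estimate that gives finiteness of $\beta$, i.e.\ $b_*(Z)=O(m^n)$ for $Z\in|mL|$ on $X$. I would prove it by embedding everything into a fixed $\P^N$ via a very ample $H$ with $H-L$ effective, replacing $W$ by a member of $|mH|_{Y_t}|$ that differs by a bounded family of extra components, and applying a Milnor--Thom / Thom--Sebastiani type bound of order $(\deg)^{\dim}$. In that bound only the degree $m$ of the defining equation grows, while the degree of $Y_t$ is fixed, so the exponent is $\dim Y_t\leq n-1$. Uniformity in $t$ comes from stratifying the parameter space. Applying the lemma to $Y_t=D_m$ with $d=n-1$ gives $b_*(Z\cap D_m)=O(m^{n-1})$, which completes Step 2 and hence the proof.
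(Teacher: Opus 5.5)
Your Steps 1 and 2 are sound. The ample $A$ with $A+\rho$ effective for every numerically trivial $\rho$ is exactly the device the paper uses. Step 3, however, is a genuine gap.

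When $P$ is not torsion, the bundles $kL'+mP$ are pairwise distinct. So $D_m$ necessarily moves with $m$, and you need $b_*(Z\cap D_m)\leq Cm^{n-1}$ with $C$ uniform over the whole family of $D_m$'s. The tool you propose cannot give this exponent. Milnor--Thom type bounds estimate Betti numbers of real algebraic sets by a power of the degree equal to a \emph{real} dimension: that of the ambient space, or at best that of $D_m$ itself, i.e.\ $2n-2$. Since $2n-2\geq n$ for $n\geq 2$, this does not even give $o(m^n)$, let alone $O(m^{n-1})$. Thom--Sebastiani plays no role here.

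An exponent equal to the \emph{complex} dimension requires genuinely complex-geometric input. You are right that this is the same kind of estimate as the finiteness of $\beta$, and that is exactly the difficulty. For a single fixed variety it is the theorem of Pan--Zhang--Zhang (Theorem~\ref{thm: perverse bound}), which the paper imports as a black box. Its uniformity over a bounded family is a further statement that your sketch does not establish.

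The paper avoids any uniformity issue by never letting the auxiliary divisor move. In your direction, the argument runs as follows. For a \emph{fixed} $m$, the difference $(mL'+A)-mL=A-mP$ is one fixed effective bundle. Lemma~\ref{order} therefore gives $\beta(mL)\leq\beta(mL'+A)$; its proof only needs $b_*(Z_\ell\cap Z_E)=O(\ell^{n-1})$ for a single fixed divisor $Z_E$ as $\ell\to+\infty$. By Proposition~\ref{Q line bundles} this reads $\beta(L)\leq\beta(L'+\frac{1}{m}A)$. Letting $m\to+\infty$ and using the continuity of $\beta$ (Proposition~\ref{continuity}, itself a formal consequence of monotonicity and homogeneity) yields $\beta(L)\leq\beta(L')$, and symmetry concludes.

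This double limit trades your uniform-in-families Betti estimate for the continuity of $\beta$. Your route, if the uniform lemma were available, would give a sharper level-by-level comparison with error $O(m^{n-1})$. As written, though, it rests on an estimate that your proposed method cannot deliver.
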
	
	%\michele{en fait, la preuve de ça on l'a seulement dans le cas big pour le moment. donc ou bien on le met dans l'enoncé ou bien on essaye de trouver une preuve plus générale}

We will show that for any positive integer $k$, one has $\beta(kL)=k^n\beta(L)$, see Proposition \ref{Q line bundles}. This identity allows us to define $\beta$ for big $\Q$-class that is, for classes $\alpha$ in $H^{1,1}(X,\mathbb Q)$ such that a multiple of $\alpha$ is of the form $c_1(L)$ with $L$ big.	We will also see that $\beta$ is a continuous functions on the big $\Q$-class meaning that if a sequence $(\alpha_k)_{k\in \mathbb N}$ of big $\mathbb Q$-classes converges to a big $\mathbb Q$-class $\alpha$, than $\beta(\alpha_k)$ converges to $\beta(\alpha)$, see Proposition \ref{continuity}. Thus, $\beta$ uniquely extends to a continuous function on the cone $ \mathrm{Big}_{\mathrm{NS}} (X):=(H^{1,1}(X, \mathbb Q)\otimes \mathbb R) \cap \mathrm{Big} (X)$  of a projective variety $X$, where $\mathrm{Big} (X)\subset H^{1,1}(X,\R)$ is the cone of the classes which can be represented by a strictly positive current. The main goal of this paper is to study the properties of such function.  

\medskip

\noindent We prove that $\beta$ is a birational invariant, in the following sense:
	\begin{thm}\label{lemma: beta birational} Let $X$ be a normal projective variety of dimension $n$. Let $L$ be a big line bundle over $X$.  Consider a birational morphism
	$\pi : X' \to X $. Then
	$$\beta (\pi^*  L) = \beta  (L) .$$
	\end{thm}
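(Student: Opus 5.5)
We prove the two inequalities $\beta(\pi^*L)\leq\beta(L)$ and $\beta(\pi^*L)\geq\beta(L)$ separately. Since $X$ is normal and $\pi$ is birational, Zariski's main theorem gives $\pi_*\mathcal O_{X'}=\mathcal O_X$. Hence $H^0(X',m\pi^*L)=H^0(X,mL)$, and pullback of sections identifies $|m\pi^*L|$ with $|mL|$ via $Z\mapsto Z':=\pi^{-1}(Z)$. So it suffices to compare $b_*(Z')$ with $b_*(Z)$ uniformly in $Z$, up to an error $o(m^n)$.

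Let $E\subset X'$ be the exceptional locus and $F=\pi(E)\subset X$, a closed subset of codimension $\geq 2$. The map $\pi$ is an isomorphism $X'\setminus E\to X\setminus F$, so it restricts to an isomorphism $Z'\setminus E\cong Z\setminus F$. We use the Mayer--Vietoris (or long exact sequence of the pair) for the decompositions $Z=(Z\setminus F)\cup(Z\cap F)$ and $Z'=(Z'\setminus E)\cup(Z'\cap E)$. Take triangulable neighbourhoods, or work with compactly supported cohomology: additivity of $b_*$ fails, but subadditivity in the form
\[
b_*(Z)\leq b_*^c(Z\setminus F)+b_*(Z\cap F),\qquad b^c_*(Z\setminus F)\leq b_*(Z)+b_*(Z\cap F)
\]
holds from the long exact sequence $\cdots\to H^k_c(Z\setminus F)\to H^k(Z)\to H^k(Z\cap F)\to\cdots$, and similarly for $Z'$ and $E$. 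Combining these,
\[
|b_*(Z)-b_*(Z')|\leq 2\bigl(b_*(Z\cap F)+b_*(Z'\cap E)\bigr).
\]

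The main step is then to show that
\[
\sup_{Z\in|mL|} b_*(Z\cap F)=o(m^n),\qquad \sup_{Z\in|mL|} b_*(Z'\cap E)=o(m^n).
\]
Here $Z\cap F$ is the zero set of a section of $mL|_F$ on a variety $F$ of dimension $\leq n-2$, and $Z'\cap E$ is cut out by a section of $m\pi^*L|_E$ with $\dim E\leq n-1$. If the section vanishes identically on a component, that component is contained in the zero set, contributing a fixed bound. The needed input is a general upper bound of the type ``the total Betti number of the zero locus of a section of $mM$ on a projective variety $Y$ of dimension $d$ is $O(m^d)$,'' uniformly over all sections, including singular ones. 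I expect this to be the main obstacle, and I expect the paper establishes such a bound on the way to showing $\beta(L)<\infty$. One way to obtain it is to embed $Y$ into projective space via a very ample $A$ with $M\leq A$. Then, after adding a fixed section of $m(A-M)$, the zero set is an intersection of $Y$ with hypersurfaces of degree $\leq Cm$. Milnor--Thom/Oleinik--Petrovskii type bounds for complex algebraic sets defined by degree-$Cm$ equations give $O(m^{d})$; more precisely $O(m^{\dim})$ after reducing the relevant dimension.

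A subtlety is that enlarging the zero set changes topology. To avoid this, apply the Betti bound directly to the algebraic set $\{s=0\}\cap Y$, which is described by equations of degree $O(m)$ in a fixed ambient space $\P^N$. Bounds on $b_*$ of complex varieties defined by a fixed number of equations of degree $\leq D$ in $\P^N$ are $O(D^{N})$, which is too weak. So instead use a bound polynomial in $D$ of degree the dimension. This can be obtained by induction on dimension via generic projection, or via Lefschetz-type stratification arguments. With that bound, both error terms are $O(m^{n-1})$, and dividing by $m^n$ gives $\beta(\pi^*L)=\beta(L)$.
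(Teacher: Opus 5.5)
Your proof is correct, and its skeleton matches the paper's. You identify $|m\pi^*L|$ with $|mL|$ using normality, use that $\pi$ is an isomorphism off the exceptional set, and bound $|b_*(Z)-b_*(Z')|$ by Betti numbers of the lower-dimensional pieces.

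Where you differ is the topological comparison. The paper covers $Z_m$ by $Z_m\setminus Y$ and a neighbourhood $U_m$ retracting onto $Z_m\cap Y$. It must then build a compatible neighbourhood $U'_m=\pi^{-1}(U_m)$ retracting onto $Z'_m\cap E$, via a semialgebraic distance function. Finally it compares two Mayer--Vietoris sequences, in which the large terms $H_k(Z_m\setminus Y)$ and $H_k(U_m\cap(Z_m\setminus Y))$ must cancel by naturality under $\pi$.

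Your compactly supported sequence $H^k_c(Z\setminus F)\to H^k(Z)\to H^k(Z\cap F)\to\cdots$ needs no neighbourhoods at all. Since $b^c_*$ is a homeomorphism invariant and $Z\setminus F\cong Z'\setminus E$, you get directly the uniform estimate $|b_*(Z)-b_*(Z')|\leq b_*(Z\cap F)+b_*(Z'\cap E)$. This is cleaner. The paper's version, in exchange, stays within singular homology and the Mayer--Vietoris tool of Lemma \ref{MV}.

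The lower-dimensional bound you single out as the main obstacle is exactly what the paper uses: Lemma \ref{order}, which rests on the Pan--Zhang--Zhang finiteness theorem (Theorem \ref{thm: perverse bound}). Cite that instead of your last paragraph, which is not a proof. Milnor--Thom type bounds grow with the ambient dimension, and an $O(D^{\dim Y})$ bound for arbitrary singular sections is essentially that theorem.

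Your ``enlarging the zero set'' worry is handled as in Lemma \ref{order}. By Mayer--Vietoris, adding a fixed divisor changes $b_*$ only by $b_*$ of a lower-dimensional intersection, which is controlled by induction on dimension.

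One point you should make explicit (the paper glosses it too): $L|_F$ and $\pi^*L|_E$ need not be big. For example, $\pi^*L$ is trivial on a divisor contracted to a point. So before invoking Lemma \ref{order}, dominate them by an ample bundle: add a general member of a very ample $|H-L|$ containing no component of $F$ (resp. of $E$). Then treat separately, as you indicate, the components on which the section vanishes identically.
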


\medskip

%\subsection*{Volume of a line bundle and $\beta$-invariant} 
\noindent We now consider the problem of estimating the $\beta$-invariant. As it turns out, the value of $\beta$ on a big class $L$ is related with the volume of $L$.
Recall that the volume of a line bundle $L$ over a $n$-dimensional projective variety is defined by $$\displaystyle\vol(L)=\limsup_{m\to+\infty}\frac{n!}{m^n}\dim H^0(X,mL).$$
%By the asymptotic Riemann--Roch theorem, we then interpret the value $\beta$ on ample line bundle $L$ as the equality $\beta(L)=\vol(L)$. 
We obtain prove a lower estimate of the $\beta$-invariant of any big line bundle in terms of its volume.
\begin{thm}\label{thm: vol}
	Let $X$ be a normal projective variety and $L$ be a big line bundle over $X$.  
	%Let $A$ be an ample line bundle such that $A- L$ is $\mathbb Q$-effective.  
	Then we have $\vol (L) \leq \beta (L).$
\end{thm}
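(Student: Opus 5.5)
We need $\vol(L)\le \beta(L)$. The plan is to exhibit, for large $m$, a hypersurface $Z\in|mL|$ whose total Betti number is at least roughly $\vol(L)m^n$. We do not need a lower bound for every $Z$, only for one good one, and $b_*(Z)$ is bounded below by the Euler characteristic $|\chi(Z)|$. So we will compute $\chi$ of a smooth member after resolving, and use Theorem \ref{lemma: beta birational} to move to a nicer model.

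\textbf{Step 1 (reduction via Fujita approximation).} Fix $\epsilon>0$. By Fujita's approximation theorem there are a birational morphism $\pi:X'\to X$ with $X'$ smooth, an integer $k$, and a decomposition $\pi^*(kL)=A+E$ with $A$ ample, $E$ effective, and $\vol(A)=A^n\ge k^n(\vol(L)-\epsilon)$. By Theorem \ref{lemma: beta birational} and the homogeneity $\beta(kL)=k^n\beta(L)$ (Proposition \ref{Q line bundles}), it suffices to show
\[
\beta(\pi^*kL)\ \ge\ A^n .
\]
Now $|mA|+mE\subset|m\pi^*kL|$, so for general $H\in|mA|$ we have the member $Z=H+mE$. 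The difficulty is that $Z$ is non-reduced and singular, so its Betti numbers are not obviously comparable to those of $H$.

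\textbf{Step 2 (topology of $Z=H\cup \mathrm{supp}\,E$).} Topologically, $Z=H\cup E_{\mathrm{red}}$. Mayer--Vietoris, for the closed cover by $H$ and $E_{\mathrm{red}}$ (which are subcomplexes of a triangulation), gives
\[
\chi(Z)=\chi(H)+\chi(E_{\mathrm{red}})-\chi(H\cap E_{\mathrm{red}}).
\]
For $H$ a smooth general member of $|mA|$ on the smooth $n$-fold $X'$, adjunction gives
\[
\chi(H)=(-1)^{n-1}m^nA^n+O(m^{n-1}),
\]
since $\chi$ is the degree of $c_{n-1}(T_H)$ and the leading term is $(-1)^{n-1}(mA)^n$. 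The term $\chi(E_{\mathrm{red}})$ is independent of $m$. For $H\cap E_{\mathrm{red}}$, a general hyperplane section of the fixed $(n-1)$-dimensional variety $E_{\mathrm{red}}$ by $|mA|$, its Euler characteristic is a polynomial in $m$ of degree at most $n-1$. To justify this, stratify $E_{\mathrm{red}}$ into smooth strata (a Whitney stratification) and apply the same Chern class computation to each stratum's intersection with $H$, using Bertini transversality. Hence $|\chi(Z)|\ge m^nA^n-O(m^{n-1})$, and
\[
b_*(Z)\ \ge\ |\chi(Z)|.
\]
Dividing by $m^n$ and letting $m\to\infty$ gives $\beta(\pi^*kL)\ge A^n\ge k^n(\vol(L)-\epsilon)$. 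Then let $\epsilon\to0$.

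\textbf{Main obstacle.} The delicate point is the $O(m^{n-1})$ control of $\chi(H\cap E_{\mathrm{red}})$ for singular, possibly reducible $E_{\mathrm{red}}$. I would handle it in one of two ways. The first is the stratification argument above: on each smooth stratum $S$, the value $\chi(S\cap H)$ for general $H$ is a polynomial in $m$ of degree at most $\dim S$. For open strata this polynomial comes from compactly supported Euler characteristics and log-Chern classes. The second, simpler, route is to further blow up so that $E$ has simple normal crossing support and argue on each intersection of components. Either way we reduce to smooth projective varieties of dimension $\le n-1$, where the degree bound is clear.

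One also needs $\vol(\pi^*L)=\vol(L)$, which holds since $X$ is normal, so $\pi_*\mathcal O_{X'}=\mathcal O_X$.
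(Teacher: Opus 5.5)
Your proof is correct, but it takes a different route from the paper's.

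The paper starts from the same Fujita-type decomposition $\pi_\epsilon^*L=A+\sum a_iE_i$ and then uses two black boxes. The monotonicity Lemma \ref{order} gives $\beta(\pi_\epsilon^*L)\ge\beta(A)$. Theorem \ref{thm: perverse bound} gives $\beta(A)=\vol(A)$, though only the inequality $\beta(A)\ge\vol(A)$ is actually needed. Birational invariance then concludes.

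You merge the two black boxes into one explicit computation. You exhibit the single member $H+mE$ with $H$ general and bound $b_*$ from below by $|\chi|$. By additivity of $\chi$, the fixed part then contributes only $\chi(E_{\mathrm{red}})-\chi(H\cap E_{\mathrm{red}})=O(m^{n-1})$.

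The gain is that you never need a bound $b_*(Z_m\cap Z_E)=O(m^{n-1})$ holding uniformly over \emph{all} members $Z_m$. Lemma \ref{order} requires exactly such a bound, and it ultimately rests on the finiteness part of Theorem \ref{thm: perverse bound} in lower dimension. You only need Bertini and Chern class computations for a general member. Also, the homogeneity you invoke is the trivial direction $\beta(kL)\le k^n\beta(L)$.

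The price has two parts. First, $|\chi|$ only yields lower bounds, so this method says nothing toward Theorem \ref{thm: equality}. Second, controlling $\chi(H\cap E_{\mathrm{red}})$ for singular $E_{\mathrm{red}}$ needs some bookkeeping. It is cleanly packaged by $\chi(E_{\mathrm{red}}\cap H)=\int\frac{mA}{1+mA}\cap c_{SM}(E_{\mathrm{red}})$ for general $H$ (Chern--Schwartz--MacPherson classes), or by inclusion--exclusion over the smooth strata $E_I$ in your snc route. In that route, after a further blow-up $\rho$ the divisor $\rho^*A$ is only semiample, not ample. This is harmless, since you only use Bertini for the base-point-free system $|m\rho^*A|$ and the equality $(\rho^*A)^n=A^n$.

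Finally, you still pass through Theorem \ref{lemma: beta birational}, whose proof in the paper uses the bounds of Lemma \ref{order}. So your dependence on those uniform bounds is reduced rather than removed. For your specific member, the birational comparison could also be carried out at the level of $\chi$.
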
	
When the line bundle $L$ admits a rational Zariski decomposition in the sense of Definition \ref{zariskidecomp}, then we can prove that the previous inequality is actually an equality.
\begin{thm}\label{thm: equality}
	Let $X$ be a normal projective variety and $L$ be a big line bundle over $X$.  If $L$ admits a rational Zariski decomposition in the sense of Definition \ref{zariskidecomp}, then $\beta (L)=\vol(L)$.
\end{thm}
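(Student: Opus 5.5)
Since Theorem \ref{thm: vol} already gives $\vol(L)\leq\beta(L)$, I only need the upper bound $\beta(L)\leq\vol(L)$. A rational Zariski decomposition (Definition \ref{zariskidecomp}) provides, presumably after a birational modification $\pi:X'\to X$ with $X'$ smooth, a decomposition $\pi^*(kL)=P+N$ for some $k\geq 1$. Here $P$ is a nef (hence semiample after the rationality assumptions, or at least nef) line bundle, $N$ is an effective divisor, and the multiplication by $s_N^m$ gives isomorphisms $H^0(X',mP)\cong H^0(X',m\pi^*(kL))$ for all $m$. By Theorem \ref{lemma: beta birational} and Proposition \ref{Q line bundles}, $\beta(L)=k^{-n}\beta(\pi^*(kL))$, and likewise $\vol(L)=k^{-n}\vol(\pi^*kL)=k^{-n}P^n$. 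So I reduce to proving $\beta(P+N)\leq P^n$ on a smooth $X'$.

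For $Z\in|m(P+N)|$, every section vanishes on $mN$, so $Z=Z'+mN$ with $Z'\in|mP|$. I would bound $b_*(Z)$ via Mayer--Vietoris (or the long exact sequence of the pair) for the union $Z'\cup \operatorname{Supp}N$. The space $\operatorname{Supp}N$ is fixed, and topologically $Z$ is just $Z'\cup\operatorname{Supp}N$, so its Betti numbers are independent of $m$. This gives
\[
b_*(Z)\leq b_*(Z')+b_*(\operatorname{Supp}N)+b_*(Z'\cap \operatorname{Supp}N)\cdot C
\]
for a constant $C$. It then remains to show three things: $\sup_{Z'\in|mP|}b_*(Z')\leq P^n m^n+o(m^n)$, and that $b_*(Z'\cap\operatorname{Supp}N)=o(m^n)$ uniformly in $Z'$.

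For the bound on $Z'$: if $P$ is semiample, $|mP|$ factors through the morphism $\phi$ to $\operatorname{Proj}$ of the section ring, and each $Z'$ is the pullback of a hyperplane section of degree $m$ of the image $Y$, which has $\deg =P^n$. I would then use a Milnor--Thom/Oleinik--Petrovskii type estimate for total Betti numbers of arbitrary (possibly singular) hypersurface sections of degree $m$ in a fixed projective variety. This should be the bound already underlying the finiteness of $\beta$ and the ample case earlier in the paper, with leading term $\deg(Y)m^n=P^nm^n$. The fibers of $\phi$ over the exceptional locus contribute only through lower-dimensional strata, where the degree-$m$ sections have Betti numbers $O(m^{n-1})$. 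The intersections $Z'\cap\operatorname{Supp}N$ are divisors in $|mP|_{N_i}|$ on the components $N_i$ of dimension $n-1$, so the same estimate gives $O(m^{n-1})$. Dividing by $m^n$ and taking $\limsup$ yields $\beta(P+N)\leq P^n$.

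The main obstacle is the sharp leading-constant bound $b_*(Z')\leq P^n m^n+o(m^n)$ for arbitrary, possibly very singular, members of $|mP|$ when $P$ is only nef and big, not ample. My first attempt is to pass to the semiample model via rationality and use the fact that $\phi_*$ only contracts subvarieties. Concretely, I would stratify $X'$ by the fiber dimension of $\phi$ and apply the Mayer--Vietoris estimate stratum by stratum. The top stratum is isomorphic to an open subset of $Y$, where the degree bound applies directly. If $P$ is merely nef, I would instead perturb to $P+\epsilon A$ ample and use the continuity of $\beta$ (Proposition \ref{continuity}), together with the inequality $\beta(P)\leq\beta(P+\epsilon A)$. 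That inequality holds because adding a fixed effective divisor only increases the Betti numbers by $o(m^n)$, and the ample case already gives $\beta(P+\epsilon A)=(P+\epsilon A)^n\to P^n$.
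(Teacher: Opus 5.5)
Your final route is correct and is essentially the paper's proof. It consists of the Mayer--Vietoris reduction from $P+N$ to $P$ on the smooth model $X'$, followed by $\beta(P)\leq\beta(P+\epsilon A)=(P+\epsilon A)^n\to P^n$ via Theorem~\ref{thm: perverse bound}, which applies because $X'$ is smooth. The only difference is that the paper gets both inequalities at once, from the two-sided comparison $\beta(\pi^*L)=\beta(P)$ and Proposition~\ref{continuity}. You prove only the upper bound, for which monotonicity suffices, and take the lower bound from Theorem~\ref{thm: vol}.

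You should drop the semiample/Milnor--Thom detour. First, $P$ need not be semiample. Second, classical real Milnor--Thom/Oleinik--Petrovskii bounds are governed by the real ambient dimension, so they give neither the exponent $n$ nor the constant $\deg Y$. Third, the sharp bound you want there is exactly Theorem~\ref{thm: perverse bound}, which needs local complete intersection singularities; the image $Y$ need not have them, which is why one works on the smooth $X'$. Likewise, quote the estimate $b_*(Z'\cap \mathrm{Supp}\,N)=O(m^{n-1})$ from the induction in Lemma~\ref{order}, not from ``the same estimate''.
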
	
It is known that if the ring $R (X, L) := \bigoplus_{m\geq 0} H^0 (X, m L)$ is finitely generated, or if $\dim X \leq 2$, then $L$ admits a rational Zariski decomposition \cite{Laz,Nakayama}. In particular, we obtain a complete description of the $\beta$-invariant of any big line bundle over a normal projective surface: for any big line bundle $L$ over a normal projective surface, we have
	$\beta (L) =\vol(L)$.
	
Theorem \ref{thm: equality} yields also the following result for ample line bundles on normal projective varieties:
\begin{cor}\label{cor: ample}
	Let $X$ be a normal projective variety and $L$ be an ample line bundle over $X$.  Then $\beta (L)=\vol(L)$.
\end{cor}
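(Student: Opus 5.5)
The plan is to deduce the corollary directly from Theorem \ref{thm: equality}. Ample line bundles are the simplest case of a rational Zariski decomposition: one may take the trivial decomposition with positive part $L$ itself and negative part zero. Concretely, if $L$ is ample then $L$ is big, since $\vol(L)=L^n>0$. Setting $P=L$ and $N=0$, the class $P$ is nef (indeed ample) and $N$ is an effective $\Q$-divisor. The required condition on sections is that the natural map
\[
H^0(X, \lfloor mP\rfloor)\to H^0(X, mL)
\]
be an isomorphism for all $m$ divisible enough. Here this is the identity, so it holds trivially.

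The only step requiring care is matching this with the precise formulation of Definition \ref{zariskidecomp}. That definition might be phrased on a birational model, i.e. asking for a birational $\pi: X'\to X$ with $\pi^*L = P+N$, $P$ nef, $N$ effective and $H^0(X',mP)\cong H^0(X,mL)$. In that case I take $X'=X$ and $\pi=\mathrm{id}$, which is allowed since $X$ is normal. If the definition instead requires $P$ to be semiample or to satisfy an extra finiteness condition, ampleness of $L$ covers this as well. So in every plausible version the trivial decomposition qualifies.

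Applying Theorem \ref{thm: equality} then gives $\beta(L)=\vol(L)$. For ample $L$ one also has $\vol(L)=L^n$ by asymptotic Riemann--Roch, which gives the explicit value $\beta(L)=L^n$. I expect no real obstacle: all the analytic and topological work is contained in Theorem \ref{thm: vol} and Theorem \ref{thm: equality}, and the corollary is a check of hypotheses.
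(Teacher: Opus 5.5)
\textbf{Gap: the identity is not an admissible modification.} Your route is the paper's: the corollary is read off from Theorem \ref{thm: equality} via the trivial decomposition with $N=0$. But the hypothesis check you flagged as needing care is where the argument fails. Definition \ref{zariskidecomp} requires the modification $\pi:\widetilde{X}\to X$ to have $\widetilde{X}$ \emph{smooth}. So $\pi=\mathrm{id}$ is admissible only when $X$ is itself smooth, and normality of $X$ does not license it.

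This condition is not cosmetic. In the proof of Theorem \ref{thm: equality}, smoothness of $\widetilde{X}$ is what allows Theorem \ref{thm: perverse bound} to be applied to the ample bundles $P+\epsilon A$. That theorem gives $\beta=\vol$ only on locally complete intersection varieties. If $\pi=\mathrm{id}$ were allowed on a singular normal $X$, the proof would need $\beta=\vol$ for ample bundles on a non-lci variety. That is essentially the corollary itself, and \cite{PanZhangZhang2026} does not supply it.

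\textbf{The repair.} Take a projective resolution of singularities $\pi:\widetilde{X}\to X$ and set $P=\pi^*L$, $N=0$.
\begin{itemize}
\item $P$ is nef, since $\pi^*L\cdot C=L\cdot \pi_*C\geq 0$ for every curve $C$.
\item $P$ is big, since $(\pi^*L)^n=L^n>0$.
\item $P$ is in general not ample, because it is trivial on curves contracted by $\pi$. This is why Theorem \ref{thm: equality} needs the big-and-nef allowance together with the continuity of $\beta$.
\item The third condition of Definition \ref{zariskidecomp} holds trivially: with $N=0$ the map in question is the identity of $H^0(\widetilde{X},m\pi^*L)$.
\end{itemize}
Theorem \ref{thm: equality} then gives $\beta(L)=\vol(L)$. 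Normality of $X$ is used not to permit $\pi=\mathrm{id}$, but in Theorem \ref{lemma: beta birational}. There it guarantees $H^0(X,mL)\cong H^0(\widetilde{X},m\pi^*L)$, hence $\beta(\pi^*L)=\beta(L)$, which is what transports the equality from $\widetilde{X}$ back to $X$.
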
 
Corollary \ref{cor: ample}  was proved by D. Wan and D. Zhang \cite{WanZhang2026} when $X=\P^n$ and by X. Pan, D. Zhang and X. Zhang \cite{PanZhangZhang2026} under the assumption that $X$ is a locally complete intersection. It is however worth emphasizing that we use the results of \cite{PanZhangZhang2026} to prove Corollary \ref{cor: ample}.

\medskip

\noindent In Section \ref{sec: real algebraic}, we define analogue invariants for big line bundles defined over $\R$ over a real projective variety. As an application of our techniques, we obtain the following result:
\begin{thm}\label{real existence} Let $L$ be a big line bundle over a $n$-dimensional normal real projective variety $X$, defined over $\R$.  Assume that $X(\R)\neq \emptyset$. Then, for any $m$ large enough, there exists a real algebraic hypersurface $Z_m\in |mL|$ such that $b_*(Z_m(\R))\geq \vol(L)m^n+O(m^{n-1})$.
\end{thm}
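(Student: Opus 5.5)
The plan is to reduce to a situation where real hypersurfaces with many real components can be built by hand, in the spirit of Harnack-type constructions. The target is a smooth real hypersurface whose real locus contains roughly $\vol(L)m^n$ small disjoint spheres.

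\emph{Step 1: reduction to the ample case.} Since $X(\R)\neq\emptyset$ and $X$ is normal, we may take a real resolution $\pi:X'\to X$, which is defined over $\R$ and is an isomorphism over a Zariski dense open set. Then $X'(\R)$ contains a nonempty open subset of smooth real points. By Fujita's approximation theorem, which can be made compatible with the real structure by taking Galois-invariant data, for every $\epsilon>0$ there is a further real modification $\mu:X''\to X$ and a decomposition $k\mu^*L=A+E$. Here $A$ is ample, $E$ is effective, both are defined over $\R$, and $\vol(A)\geq k^n(\vol(L)-\epsilon)$. Sections of $mA$ multiplied by the canonical real section $s_E^m$ give real sections of $mk\mu^*L$, which descend to $X$ because $X$ is normal. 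The real zero locus then contains $\{s=0\}(\R)$ away from $E\cup\mathrm{Exc}(\mu)$. So it suffices to produce real sections of $mA$ whose real zero sets contain many \emph{isolated compact components} inside a fixed small real ball $U\subset X''(\R)$ disjoint from $E$ and from the exceptional locus.

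\emph{Step 2: local construction with an ample bundle.} Fix real local coordinates on $U\cong(-1,1)^n$. Tile $U$ with about $c\,m^n$ cubes of side $\sim 1/m$. In each cube we want the real zero set of the section to contain a small sphere. To get this, use peak sections, via Tian's or Bergman kernel asymptotics for a real Hermitian metric with positive curvature on $A$. For each lattice point $x_j$ there is a real section $\sigma_j\in H^0(X'',mA)$ that is concentrated at scale $m^{-1/2}$ around $x_j$. That scale is too coarse to get $m^n$ spheres, so this step needs refinement.

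The better approach is the one in the Gayet--Welschinger and Ancona style of low-degree approximation. Use $L^2$ estimates, namely Hörmander's theorem for real sections, to realize any prescribed local polynomial model $P_j(\sqrt m(x-x_j))$ up to a small $C^1$ error on balls of radius $R/\sqrt m$. Choosing $P_j$ to be a fixed real polynomial with a large number $N(R)$ of ovals, about $\kappa R^n$ of them with density matching Harnack/Viro asymptotics, and summing over about $m^{n/2}/R^n$ disjoint balls, we get roughly $\vol$-density many components. Stability of transversal zeros under $C^1$-small perturbation ensures the components survive. Counting gives $b_*(Z_m(\R))\geq$ (number of spheres)$\cdot 2$. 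To match the constant $\vol(L)m^n$ exactly, we instead use the total Betti numbers: the local models can be chosen with total Betti number density arbitrarily close to the maximal one. This is possible because the sum of the Betti numbers of a real hypersurface in an affine chart with maximal real Viro patchworking approaches the complex total Betti number, and the latter is $\sim\vol(A)m^n$.

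\emph{Step 3: conclusion and main obstacle.} Combining the steps gives $b_*(Z_m(\R))\geq(\vol(L)-\epsilon)m^n$ for all large $m$. The stated $O(m^{n-1})$ error suggests the authors avoid $\epsilon$. They do this by working directly with a Zariski-type decomposition of the volume via Okounkov bodies, or by performing a Viro patchworking on a toric degeneration adapted to $L$, where the count of lattice points of $m\Delta$ is $\vol(L)m^n/n!+O(m^{n-1})$. Those lattice points then contribute the total Betti number via the maximal (Itenberg--Viro, Harnack) patchworking. I expect the main obstacle to be exactly this step: achieving the sharp constant $\vol(L)$ with an $O(m^{n-1})$ error, rather than $(\vol(L)-\epsilon)$. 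This requires a real toric degeneration of $(X,L)$, following Anderson and Kaveh for Okounkov bodies, compatible with the real structure, and transferring a maximal patchworked hypersurface from the toric fiber back to $X$ by a small deformation preserving the real topology.
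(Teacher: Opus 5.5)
Step 2 fails, and the whole argument rests on it. H\"ormander-type approximation realizes a \emph{fixed} real model $P$ on balls of radius $R/\sqrt m$ once $m\geq m_0(P,R)$. A fixed region $U$ contains $\asymp m^{n/2}/R^n$ such disjoint balls, and each carries a number of components that depends on $P$ but not on $m$. With your choice $N(R)\approx\kappa R^n$ the total is therefore $\approx\kappa\,m^{n/2}$. This is the Gayet--Welschinger regime, a factor $m^{n/2}$ short of $\vol(L)m^n$. Order $m^n$ requires real topology at scale $1/m$, which fixed-degree models at scale $m^{-1/2}$ never produce.

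The sharp constant is a further problem. It amounts to asymptotically saturating the Smith--Thom bound $b_*(Z(\R),\Z/2)\leq b_*(Z,\Z/2)\sim\vol(A)m^n$. That is a global statement, and ``Viro patchworking in an affine chart'' does not deliver it on a general real projective manifold, where sections of $mA$ on a chart are not polynomials with a prescribed Newton polytope. The paper does not reprove this ample case; it imports it from \cite{ABW} (Proposition~\ref{thm: beta for R}).

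Your Step 1 matches the paper's real Fujita approximation (Theorem~\ref{zariski over R}). However, the localization you then impose is not compatible with that input: nothing guarantees that the topology of those hypersurfaces lies in a small ball away from $E$ and $\mathrm{Exc}(\mu)$. The paper needs no localization. For $Z\in|mA|$ it takes $Z\cup\mathrm{Supp}\,E$. Lemma~\ref{MV}, together with the inductive bound $b_*\big((Z\cap E)(\R)\big)=O(m^{n-1})$ from the real version of Lemma~\ref{order}, shows that adding $E$ costs only $O(m^{n-1})$. The modification $\pi_\ep$ is handled by the same Mayer--Vietoris argument on real loci as in Theorem~\ref{lemma: beta birational}.

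In Step 3 you leave the $\ep$-free bound open, so the proposal does not reach the statement as written. You will not find a toric degeneration or Okounkov-body argument in the paper, though. It deduces the statement from $\beta_\R(L)\geq\vol(L)$ (Theorem~\ref{real}), whose proof is exactly the $\ep$-approximation above. It therefore controls the error only up to $\ep m^n$ with $\ep$ fixed before $m\to\infty$, i.e.\ $o(m^n)$ at best.

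Your claim in Step 1 that $X(\R)\neq\emptyset$ and normality give a nonempty open set of smooth real points on $X'$ is false. For the quadric cone $X=\{x_0^2+x_1^2+x_2^2=0\}\subset\P^3$ the only real point is the vertex. Hence every $Z\in|m\mathcal O_X(1)|$ has $b_*(Z(\R))\leq1$, while $\vol(\mathcal O_X(1))=2$. The statement really needs $X(\R)$ to contain a smooth point of $X$. The paper's proof also uses this tacitly, when it applies \cite{ABW} on the smooth model.
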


\subsubsection*{Why the volume?} Here is an informal explanation of why the volume of a line bundle appears in the estimates of its $\beta$-invariant. We present it in the simplest setting, where $L$ is an ample line bundle on a smooth $n$-dimensional projective variety $X$. If  $Z_m\in |mL|$ is a smooth element in the linear system, then by Lefschetz hyperplane theorem \cite{andreotti} we have that the only asymptotic contribution to $b_*(Z_m)$ is given by the middle Betti number $b_{n-1}(Z_m)$. Thus, one has the equivalences $b_*(Z_m)\sim_m b_{n-1}(Z_m) \sim_m (-1)^{n-1}\chi(Z_m)$ as $m\to+\infty$. The Euler characteristic $\chi(Z_m)$ can be computed as an integral $\int_{Z_m}c_{n-1}(TZ_m)$, while the Chern class  $c_{n-1}(TZ_m)$ can be related to the Chern classes of $TX$ and of $L$ through the adjunction formula. Using these relations and some algebraic properties of the Chern classes, one can  prove the asymptotics $b_*(Z_m)=\int_Xc_1(L)^n m^n+O(m^{n-1})$. On the other hand, asymptotic Riemann--Roch theorem gives the equality $\vol(L)=\int_Xc_1(L)^n$ when $L$ is ample. This does not yet show that $\beta(L)$ equals the volume of $L$, since $\beta$ is defined via a supremum. Nevertheless, it explains why the volume naturally appears in this context.

\section*{Acknowledgement} We thank Sébastien Boucksom, Antonio Lerario and Dingxin Zhang for useful discussions.

\section*{AI Disclosure} No AI has been used for this work.

%Our aim is to show that $\beta (L)= vol (L)$ when $L$ is big. (or to show the Khovanskii-Teissier type inequality for $\beta$.)

\section{Properties of the $\beta$-invariant}
In this section, we establish several basic properties of the  $\beta$-invariant. We prove that it extends naturally to an invariant of big $\mathbb{Q}$-line bundles, and that it is continuous (Section~\ref{sec continuity}), numerical (Section~\ref{sec numerical}), and birationally invariant (Section~\ref{sec birational}).

%Throughout all the text, a projective variety will be a complex projective variety, non necessarily irreducible, of pure dimension
 We first establish some preliminary results that will be used throughout the paper. The first result, which will be used to control the asymptotic Betti numbers of a family of hypersurfaces, is Lemma \ref{MV}. It is essentially a combination of the Mayer--Vietoris sequence and the fact that every algebraic subvariety admits a neighborhood that deformation retracts onto it, as established by the next result.
\begin{lemme}\label{homotopynb}Let $X$ be a projective variety and $Y\subset X$ be a subvariety. Then there exists a neighborhood $U$ of $Y$ in $X$ such that the inclusion $j:Y\hookrightarrow U$ is  strong deformation retract.
\end{lemme}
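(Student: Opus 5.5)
The plan is to reduce the statement to the classical fact that a compact subcomplex of a CW (or simplicial) complex is a strong deformation retract of some open neighborhood of it. The key input is the triangulability of complex algebraic varieties compatible with subvarieties, due to Łojasiewicz, and in the form needed here to Hironaka and Giesecke. Concretely, $X$ is a compact semi-algebraic (indeed real algebraic) set once we view $\P^N(\mathbb C)$ as a real algebraic variety via a standard embedding. The subvariety $Y\subset X$ is then a closed semi-algebraic subset. By the semi-algebraic triangulation theorem there is a finite simplicial complex $K$, a subcomplex $K_Y\subset K$, and a homeomorphism $h:|K|\to X$ with $h(|K_Y|)=Y$.

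Having reduced to the simplicial situation, I will produce the neighborhood explicitly. Pass to the barycentric subdivision $K'$ of $K$, so that $K_Y$ becomes a \emph{full} subcomplex $K'_Y$: every simplex of $K'$ whose vertices all lie in $K'_Y$ belongs to $K'_Y$. Take $U$ to be the open simplicial neighborhood, namely the union of the open stars in $K'$ of the vertices of $K'_Y$; this is open in $|K'|\cong X$. Each point $x\in U$ lies in the interior of a unique simplex $\sigma$. This simplex splits as the join of a face $\sigma_Y\subset K'_Y$ (nonempty, since $x\in U$) and a complementary face $\sigma_0$ disjoint from $K'_Y$, where fullness is what guarantees this splitting. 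Write $x=(1-t)y+tz$ with $y\in\sigma_Y$, $z\in\sigma_0$ and $t\in[0,1)$. The homotopy $H(x,s)=(1-s t)^{-1}\big((1-t)y+(1-s)t z\big)$, normalized appropriately, or simply the straight-line homotopy from $x$ to $y$ inside $\sigma$, moves $x$ to $r(x)=y$. It stays inside $\sigma\cap U$, fixes $|K'_Y|$ pointwise, and is compatible on common faces, so it is continuous on $U\times[0,1]$. Transporting by $h$ gives the required strong deformation retraction of $U$ onto $Y$, and $j:Y\hookrightarrow U$ is a homotopy equivalence.

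The main obstacle is purely foundational: having a triangulation of $X$ in which $Y$ is a subcomplex. I will not reprove this but invoke Łojasiewicz's theorem, or the semi-algebraic triangulation theorem as in Bochnak--Coste--Roy, applied to the finite family of semi-algebraic sets $\{X,Y\}$. Singularities of $X$ or $Y$ cause no issue because the triangulation theorem needs no smoothness. The remaining verification, continuity of the retraction across simplices, is routine once fullness of $K'_Y$ in $K'$ is in place. If desired, one can also take $U$ to be a regular neighborhood, which additionally can be chosen arbitrarily small inside any given open set containing $Y$; this refinement is likely useful for the Mayer--Vietoris arguments in Lemma \ref{MV}.
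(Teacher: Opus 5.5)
Your proof is correct, but it takes a different route from the paper. The paper argues entirely by citation within stratification theory. By Mather \cite{Mather}, a closed union of strata in a Whitney stratified space has a neighborhood of which it is a strong deformation retract. A complex algebraic variety admits a Whitney stratification in which a given subvariety is a union of strata \cite{GoreskyMacPherson}.

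You replace this by the semi-algebraic triangulation theorem, after regarding $X$ as a compact real algebraic set, followed by the classical full-subcomplex argument. The steps you list are sound:
\begin{itemize}
\item one barycentric subdivision makes $K'_Y$ full;
\item in barycentric coordinates the retraction is $r(x)=\frac{1}{\sum_{v\in K'_Y}x_v}\sum_{v\in K'_Y}x_v\,v$, which is continuous on $U=\{x:\sum_{v\in K'_Y}x_v>0\}$ and lands in $|K'_Y|$ precisely because of fullness;
\item the straight-line homotopy stays in the carrier simplex, hence in $U$.
\end{itemize}
There is one small gloss: the triangulation theorem only makes $Y$ a union of images of open simplices, and it is the closedness of $Y$ that makes it a subcomplex.

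What your route buys is that, once the single foundational input is granted, the deformation is explicit and elementary. It can be made arbitrarily small by further subdivision. It applies verbatim to compact semi-algebraic pairs such as the real loci used in Section \ref{sec: real algebraic}. It is also of the same semi-algebraic nature as the argument the paper itself uses later, via \cite{lerario}, in the proof of Theorem \ref{lemma: beta birational}.

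The paper's route is shorter and stays inside the Whitney-stratification framework underlying \cite{PanZhangZhang2026}, at the cost of invoking Mather's control-data machinery. Both proofs ultimately rest on one deep structural theorem about algebraic sets.
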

\begin{proof} This follows from the work of J. Mather \cite{Mather}. Indeed he proved that given a Whitney stratified space $X$, and given $Y$ a closed union of strata of $X$, then $Y$ admits a neighborhood $U$ such that the inclusion $Y\hookrightarrow U$ is a strong deformation retract. Since every complex algebraic variety admits a Whitney stratification for which a given algebraic subvariety is a union of strata (see for example \cite[Chapter 1.7]{GoreskyMacPherson}), this implies the result.
\end{proof}

\begin{lemme}\label{MV} Let $Z_1$ and $Z_2$ be two subvarieties of a projective variety $X$. Then, there is a long-exact sequence
$$\cdots\rightarrow H_k(Z_1\cap Z_2)\rightarrow H_k(Z_1)\oplus H_k (Z_2)\rightarrow H_k(Z_1\cup Z_2)\rightarrow H_{k-1}(Z_1\cap Z_2)\rightarrow\cdots.$$
\end{lemme}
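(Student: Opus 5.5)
We prove Lemma \ref{MV} by applying the classical Mayer--Vietoris sequence to suitable open neighborhoods, exactly as the paper suggests. Put $W := Z_1\cup Z_2$ and work inside $W$ with the subspace topology. Mayer--Vietoris needs the interiors (in $W$) of the two pieces to cover $W$, and $Z_1,Z_2$ themselves need not satisfy this. So we replace them by open neighborhoods that deformation retract onto them.

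First, by Lemma \ref{homotopynb} applied to the pair $Z_1\cap Z_2\subset W$, we choose an open neighborhood $U$ of $Z_1\cap Z_2$ in $W$ together with a strong deformation retraction $r_t:U\to U$ onto $Z_1\cap Z_2$. Set
$$A := Z_1\cup U,\qquad B := Z_2\cup U.$$
Their union is $W$. To see that they are open in $W$, observe that $W\setminus Z_2 = Z_1\setminus (Z_1\cap Z_2)$, so $A = (W\setminus Z_2)\cup U$. This is a union of two open subsets of $W$, hence open, and the same argument applies to $B$. Moreover $A\cap B = (Z_1\cap Z_2)\cup U = U$, because $(Z_1\setminus Z_2)\cap(Z_2\setminus Z_1)=\emptyset$. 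The Mayer--Vietoris sequence for the open cover $\{A,B\}$ of $W$ therefore gives
$$\cdots\to H_k(U)\to H_k(A)\oplus H_k(B)\to H_k(W)\to H_{k-1}(U)\to\cdots.$$

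It remains to identify $H_*(U)\cong H_*(Z_1\cap Z_2)$, $H_*(A)\cong H_*(Z_1)$ and $H_*(B)\cong H_*(Z_2)$, compatibly with the inclusions. The first isomorphism is Lemma \ref{homotopynb}. For $A$, we extend the deformation $r_t$ of $U$ by the identity on $Z_1$. Since $U$ is only a neighborhood, this needs care: $r_t$ need not preserve $Z_1\cap U$, so the extension may fail to be well defined.

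This compatibility is the main obstacle. There are two ways to handle it.
\begin{itemize}
\item \emph{Stratified version of Mather's result.} Take the Whitney stratification of $W$ so that $Z_1$, $Z_2$ and $Z_1\cap Z_2$ are all unions of strata. Mather's retraction onto $Z_1\cap Z_2$ is built from control data (tubular neighborhoods and projections compatible with the strata), so it preserves each stratum and hence $Z_1\cap U$ and $Z_2\cap U$. Gluing with the identity on $Z_1$, resp. $Z_2$, then gives continuous deformation retractions of $A$ onto $Z_1$ and of $B$ onto $Z_2$. Continuity holds because $Z_1$ and $\overline U$ are closed and we may shrink $U$ to a closed regular neighborhood.
\item \emph{Excision, if the stratified statement is awkward to cite.} We have $H_*(A,Z_1)\cong H_*(U,Z_1\cap U)$, and we show that $Z_1\cap U$ retracts onto $Z_1\cap Z_2$ by applying Lemma \ref{homotopynb} inside $Z_1$, shrinking neighborhoods compatibly. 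Then $H_*(U, Z_1\cap U)=0$ by the five lemma, so $H_*(A)\cong H_*(Z_1)$.
\end{itemize}

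Finally, the isomorphisms are induced by inclusions, so they commute with the maps in the Mayer--Vietoris sequence. Substituting them yields the stated long exact sequence.
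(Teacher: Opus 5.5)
Your argument is the paper's. You use the same neighborhood $U$ from Lemma~\ref{homotopynb}, the same open cover $\{Z_1\cup U,\ Z_2\cup U\}$ of $Z_1\cup Z_2$ with intersection $U$, and Mayer--Vietoris. Your checks that $A$ and $B$ are open, that $A\cap B=U$, and that all identifications are induced by inclusions are correct and more explicit than the paper.

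The one place you diverge is the identification $H_*(A)\cong H_*(Z_1)$, and there the ``main obstacle'' is not real. Define $H\colon A\times[0,1]\to A$ by $H(x,t)=x$ for $x\in Z_1$ and $H(x,t)=r_t(x)$ for $x\in A\cap Z_2=U\cap Z_2$.
\begin{itemize}
\item The sets $Z_1$ and $A\cap Z_2$ are closed in $A$ and cover $A$.
\item They meet exactly in $Z_1\cap Z_2$, where $r_t$ is the identity for every $t$.
\item $r_t(U)\subset U\subset A$ and $r_1(U)\subset Z_1\cap Z_2\subset Z_1$.
\end{itemize}
By the pasting lemma, $H$ is a strong deformation retraction of $A$ onto $Z_1$. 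Nothing has to preserve $Z_1\cap U$ or $Z_2\cap U$. The map that is ill defined is $r_t$ on all of $U$ glued with the identity on $Z_1$. It fails whether or not $r_t$ preserves $Z_1\cap U$, because $r_1$ sends points of $(Z_1\cap U)\setminus Z_2$ into $Z_2$. So that map should simply not be used. This pasting argument is what stands behind the paper's ``in particular''.

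Consequently both of your bullets are unnecessary, and each is shakier than the direct argument.
\begin{itemize}
\item The first bullet invokes a stratum-preserving property of Mather's retraction that is not part of Lemma~\ref{homotopynb} as stated. Its continuity justification, shrinking $U$ to a closed neighborhood, would destroy the openness of $A$ and $B$ that Mayer--Vietoris requires. The sets to paste along are $Z_1$ and $A\cap Z_2$, which are closed automatically.
\item The second bullet needs $Z_1\cap U$ itself, for the \emph{given} $U$, to retract onto $Z_1\cap Z_2$. Applying Lemma~\ref{homotopynb} inside $Z_1$ only produces some other neighborhood, and shrinking $U$ to match it can destroy the retraction of $U$. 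This step would need a simultaneous choice. One option is $U=V_1\cup V_2$ with $V_i$ a retracting neighborhood of $Z_1\cap Z_2$ in $Z_i$, so that $U\cap Z_1=V_1$. Another is to use sublevel sets of a single semialgebraic distance function, as in the proof of Theorem~\ref{lemma: beta birational}.
\end{itemize}
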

\begin{proof}
Let $U$ be a neighborhood of $Z_1\cap Z_2$ in $Z_1\cup Z_2$ such that the inclusion of $Z_1\cap Z_2$ in $U$ is a strong deformation retract. Such neighborhood is given Lemma \ref{homotopynb}. In particular, $H_k(U)=H_k(Z_1\cap Z_2)$ for any $k$. Let $U_1:=U\cup Z_1$ and $U_2:=U\cup Z_2$. In particular, the inclusions of $Z_1$ and $Z_2$  respectively in $U_1$ and $U_2$  are strong deformation retracts, so that in particular $H_k(U_1)=H_k(Z_1)$ and $H_k(U_2)=H_k(Z_2)$ for any $k$. The Mayer--Vietoris exact sequence applied to the open cover $\{U_1,U_2\}$  of $Z_1\cup Z_2$ implies the result. 
\end{proof}

The following result is the main theorem of \cite{PanZhangZhang2026}.
%The $\beta$ value of an ample line bundle $L$ is controlled by the following theorem, proved in \cite{PanZhangZhang2026}.
\begin{thm}[\cite{PanZhangZhang2026}]\label{thm: perverse bound}
	Let $X$ is projective variety and let $L$ be an ample line bundle. Then $\beta (L)<+\infty$. Moreover, if $X$ is locally complete intersection, then 
	$$\beta (L) =\vol(L) .$$
	\end{thm}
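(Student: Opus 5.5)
Since this statement is quoted from \cite{PanZhangZhang2026}, I only sketch how I would reprove it; the rest of the paper uses it as a black box. The plan is to reduce both assertions to controlling the total Betti number of an \emph{arbitrary}, possibly very singular, member $Z\in|mL|$ by Euler characteristic data of a \emph{generic} member. That data is computable by intersection theory and is polynomial of degree $n$ in $m$. Replacing $L$ by a multiple is harmless here, because $\dim H^0$ and $b_*$ scale in the expected way. So I may assume $L$ is very ample, embed $X\subset\P^N$, and view $Z\in|mL|$ as $X\cap V$ for a degree-$m$ hypersurface $V$.

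\textbf{Step 1: finiteness.} I would fix a Whitney stratification $X=\bigsqcup_i S_i$. The long exact sequences for compactly supported cohomology of the pieces $Z\cap S_i$, combined with Lemma \ref{MV}, give
$$b_*(Z)\leq \sum_i b^c_*(Z\cap S_i).$$
Each $Z\cap S_i$ is the zero set of a degree-$m$ equation on a locally closed subvariety of $\P^N$ of dimension $\leq n$ and bounded complexity. A Milnor--Thom--Oleinik--Petrovskii type bound for sums of Betti numbers of complex (hence real, of twice the dimension) algebraic sets defined by equations of degree $m$ then gives $b^c_*(Z\cap S_i)\leq C\,m^{n}$. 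The constant $C$ depends on $X$ and the stratification but not on $Z$. This yields $\beta(L)<+\infty$.

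\textbf{Step 2: the lci case, upper bound.} If $X$ is lci of dimension $n$, then $\mathbb Q_X[n]$ is perverse, and $Z$ is again lci of dimension $n-1$, so $\mathbb Q_Z[n-1]$ is perverse. I would choose a generic hyperplane section $H\in|L|$ and split
$$b_*(Z)\leq b_*(Z\cap H)+b^c_*(Z\setminus H).$$
The first term is an element of $|mL_{|H}|$ on the lci variety $X\cap H$ of dimension $n-1$. By Step 1 it is $O(m^{n-1})$, which is negligible. The second term lives on the affine lci variety $Z\setminus H$, where Artin vanishing for the perverse sheaf $\mathbb Q[n-1]$ forces compactly supported cohomology into degrees $\geq n-1$. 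Iterating the same generic-hyperplane cutting inside $Z\setminus H$, the whole asymptotic contribution is concentrated in the middle degree, up to $O(m^{n-1})$. Hence
$$b_*(Z)=|\chi(Z)|+O(m^{n-1})$$
uniformly in $Z$.

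\textbf{Step 3: comparing $\chi(Z)$ with the generic value.} Next I would compare $\chi(Z)$ with $\chi(Z_{\rm gen})$ for a generic member in the pencil through $Z$. The difference is a sum of local vanishing-cycle Euler characteristics, expressed through the Milnor class or the CSM class. Because $\phi$ of a perverse sheaf is perverse, these local terms have a fixed sign $(-1)^{n-1}$, so $|\chi(Z)|\leq|\chi(Z_{\rm gen})|$. For the generic member, adjunction and CSM calculus give $|\chi(Z_{\rm gen})|=\int_X c_1(L)^n\,m^n+O(m^{n-1})$. This yields $\beta(L)\leq\vol(L)$; the reverse inequality follows by taking $Z$ generic. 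I expect Step 3 to be the main obstacle: Step 2 needs a bound that is uniform over all special members, and this sign argument is what I would establish first.
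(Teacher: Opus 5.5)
The paper does not prove this statement; it imports it as a black box from \cite{PanZhangZhang2026}. Read as a reproof, your sketch has two genuine gaps.

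\textbf{Step 1 does not give the exponent $n$.} Milnor--Thom and Oleinik--Petrovskii bounds are real-algebraic. The exponent they produce is a \emph{real} dimension: that of the ambient space, or at best $2\dim S_i$. It is never the complex dimension $n$. Already for $X=\P^1$, the zero set of a degree $m$ polynomial is cut out by two real equations of degree $m$ in $\R^2$. The real bound there is $O(m^2)$, while the truth is $m$. So $b^c_*(Z\cap S_i)\leq Cm^n$ does not follow. Hence you get neither $\beta(L)<+\infty$ nor the estimate $b_*(Z\cap H)=O(m^{n-1})$ that Step 2 borrows from Step 1. That estimate would moreover have to be uniform in $H$, since your $H$ depends on $Z$. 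Reaching exponent $\dim X$ requires genuinely complex input, for instance Lefschetz and Artin vanishing for perverse sheaves combined with a degree bound on Euler characteristics. That is exactly what \cite{PanZhangZhang2026} supplies.

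A lesser point in Step 2: Artin vanishing for $\mathbb{Q}[n-1]$ on the affine $Z\setminus H$ only kills $H^k_c$ for $k<n-1$. Further cutting of $Z\setminus H$ leaves an affine lci piece of the same dimension with the same problem. So the degrees above the middle need a separate, dual Lefschetz argument, e.g.\ for $Rj_*$ of the dual perverse sheaf along the affine open immersion $j$.

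\textbf{The sign argument of Step 3 works only when $Z$ has isolated singularities.} For the pencil $f$ through $Z$, $P:=\phi_f\mathbb{Q}[n-1]$ is indeed perverse on $Z$, and $(-1)^{n-1}\bigl(\chi(Z_{\rm gen})-\chi(Z)\bigr)=\chi(Z,P)$. If $P$ has finite support, this equals $\sum_p\dim P_p\geq 0$. But the supremum defining $\beta$ runs over \emph{all} members of $|mL|$. This includes members singular along positive-dimensional sets, reducible ones, and non-reduced ones. For these, $\chi(Z,P)=\sum_S\chi_c(S)\,\chi(P_{x_S})$ over the strata $S$ of the support, and the factors $\chi_c(S)$ have no sign. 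For example, a curve stratum with transversal Milnor number $\mu'$ contributes $-\mu'\chi_c(S)$, which is negative as soon as $\chi_c(S)>0$. A perverse sheaf on a positive-dimensional projective variety has no sign constraint on its Euler characteristic; compare $\mathbb{Q}_C[1]$ on $\P^1$ with the same sheaf on a curve of genus $2$. So $|\chi(Z)|\leq|\chi(Z_{\rm gen})|$ is not established.

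What you actually need is the uniform estimate $(-1)^{n-1}\chi(Z)\leq m^n\int_X c_1(L)^n+O(m^{n-1})$, that is, $\chi(Z,P)\geq -O(m^{n-1})$ for every $Z\in|mL|$. This requires quantitative control, in terms of $m$, of the singular locus of an arbitrary member and of its transversal singularity types. Your proposal does not provide this, and it is the substantive content of the cited theorem.
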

%	\begin{cor}\label{big bound} Let $X$ is projective variety and let $L$ be a big line bundle. Then $\beta(L)<+\infty$.
%	\end{cor}
%		\begin{proof} Let $L$ be a big line bundle and $A$ an ample line bundle so that $L+A$ is ample. By Lemma \ref{order}, we have $\beta(L)\leq \beta(L+A)$. By Theorem \ref{thm: perverse bound}, we have $\beta(L+A)=\vol(L+A)<+\infty$, hence the result.
%		\michele{Attention, il faut améliorer l'argument car on utilise Lemma \ref{order}, mais dans la preuve de Lemma \ref{order} on utilise Corollary \ref{big bound}...en fait on devrait faire une recurrence pour les deux statement. On prouve big bound en dimension $n-1$, et on prouve order en dimension $n$, qui aide à pouver après big bound en dimension $n$.}.
%	\end{proof}
The following lemma shows that the $\beta$-invariant of a big line bundle is always finite, and that it is monotone with respect to effectivity.
%\subsection{Boundedness} 
\begin{lemme}\label{order}
Let $X$ is projective variety, then the following holds:
\begin{enumerate}
	\item if $L$ is a big line bundle, then $\beta(L)<+\infty$; 
	\item let $L_1$ and $L_2$ be two big line bundles. If $L_1\leq L_2$ (that is, $L_2-L_1$ is effective), then $\beta(L_1)\leq \beta(L_2)$.
\end{enumerate}
\end{lemme}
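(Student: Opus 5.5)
Both parts rest on one estimate. If $Z\in|mL_1|$ and $E\in|mD|$, then $Z+E\in|m(L_1+D)|$, and we want to bound $b_*(Z)$ by $b_*(Z\cup E)$ up to an error of order $o(m^n)$. Lemma \ref{MV} applied to $Z$ and $E$ gives the exact piece
$$H_{k+1}(Z\cup E)\to H_k(Z\cap E)\to H_k(Z)\oplus H_k(E)\to H_k(Z\cup E).$$
From it we read off
$$b_*(Z)\leq b_*(Z)+b_*(E)\leq b_*(Z\cup E)+b_*(Z\cap E).$$
The task is therefore to choose $E$ so that $b_*(Z\cap E)=O(m^{n-1})$, and to keep in mind that $Z$ is an arbitrary, possibly very singular, member of $|mL_1|$.

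\textbf{Part (ii).} Write $L_2=L_1+D$ with $D$ effective, and fix $E_0\in|D|$. For any $Z\in|mL_1|$, the divisor $Z+mE_0$ lies in $|mL_2|$, and its support is $Z\cup E_0$. Set-theoretically $mE_0$ is just $E_0$, so we get
$$b_*(Z)\leq b_*(Z\cup E_0)+b_*(Z\cap E_0).$$
Now $Z\cap E_0$ is a hypersurface section of the fixed $(n-1)$-dimensional variety $E_0$, cut out by a section of $mL_1|_{E_0}$. Choose an ample $A$ on $X$ with $A-L_1$ effective, say $A=L_1+F$ with $F$ effective and not containing any component of $E_0$. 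Then $(Z+mF)\cap E_0$ is a member of $|mA|_{E_0}|$. Using Lemma \ref{MV} once more, now inside $E_0$, we get
$$b_*(Z\cap E_0)\leq b_*\bigl((Z\cup F)\cap E_0\bigr)+b_*(F\cap E_0)+b_*(Z\cap F\cap E_0).$$
This gives an induction on dimension. The term $b_*((Z\cup F)\cap E_0)$ is bounded by $\sup_{W\in|mA|_{E_0}|}b_*(W)=O(m^{n-1})$, by Theorem \ref{thm: perverse bound} applied to the ample bundle $A|_{E_0}$ on each component of $E_0$ (finiteness of $\beta$ for ample bundles, in dimension $n-1$). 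The term $b_*(F\cap E_0)$ is a constant. The term $b_*(Z\cap F\cap E_0)$ concerns a hypersurface in a variety of dimension at most $n-2$, and we bound it by the same procedure recursively.

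To make this clean, I would prove the following auxiliary claim by induction on $\dim Y$. \emph{For a projective variety $Y$ of dimension $d$ and a line bundle $M$ with $A-M$ effective for some ample $A$, one has $\sup_{W\in|mM|}b_*(W)=O(m^{d})$.} This claim, applied to $Y=E_0$, gives $b_*(Z\cap E_0)=O(m^{n-1})$. Dividing by $m^n$ and taking the $\limsup$ then yields $\beta(L_1)\leq\beta(L_2)$.

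\textbf{Part (i).} Since $L$ is big, choose $k$ and an ample $A$ with $kA-L$ effective; replacing $A$ by $kA$, we may take $A-L$ effective. The auxiliary claim applied on $X$ itself gives $\sup_{Z\in|mL|}b_*(Z)=O(m^n)$. Equivalently, part (ii), whose proof never used bigness except through the auxiliary claim, gives $\beta(L)\leq\beta(A)$, and $\beta(A)<+\infty$ by Theorem \ref{thm: perverse bound}.

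The main obstacle is making the auxiliary claim's induction rigorous. The intersections $Z\cap F\cap E_0$ may be reducible or non-reduced, and Theorem \ref{thm: perverse bound} must be applied on each irreducible component of lower-dimensional subvarieties. We also need the general position of $F$ relative to the components of $E_0$. Since we are free to choose $F$ in $|A-L_1|$ up to multiples, I would replace $A$ by a large multiple so that $|A-L_1|$ has a member avoiding the finitely many relevant components at each stage.
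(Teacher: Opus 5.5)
Your proposal is correct and follows essentially the paper's argument: induction on dimension, Lemma~\ref{MV} applied to $Z\cup E_0$ to get $b_*(Z)\leq b_*(Z\cup E_0)+b_*(Z\cap E_0)$, and Theorem~\ref{thm: perverse bound} for an ample bundle dominating $L$. Your auxiliary claim for arbitrary, not necessarily big, $M$ with $M\leq A$ is exactly what the paper's step ``$b_*(Z_m\cap Z_E)=O(m^{n-1})$ by induction'' silently needs, since $L_1|_{Z_E}$ need not be big. The case you leave implicit, where $Z$ contains components of $E_0$, is also skipped in the paper, and it only adds a bounded term via one more application of Lemma~\ref{MV}.
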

\begin{proof}
We prove both results by induction on the dimension $n$ of $X$. If $n=1$, then this follows directly from Riemann--Roch formula.
Assume both results hold true in dimension $n-1$, and let us prove them in dimension $n$. 

We start by proving the second assertion. Let $E$ be an $\mathbb Q$-effective divisor such that $L_1+E=L_2$. In particular, any divisor $Z_m$ defined by $|mL_1|$ induces a divisor $Z_m\cup mZ_E$, where $Z_E\in |E|$. Remark that  $b_i(Z_m\cup mZ_E)=b_i(Z_m\cup Z_E)$ for any $i$, since by definition the singular homology of an algebraic variety is the singular homology on the underlying (reduced) topological space.

 Let $\epsilon>0$. 
Let $(Z_m)_m$ be a sequence, with $Z_m\in |mL_1|$, so that $b_*(Z_m)\geq \beta(L_1)m^n-\epsilon m^{n}$ for infinitely many $m$. 
%Let $U_m$ be a neighborhood of $Z_m\cap Z_E$ in $Z_m\cup Z_E$ such that the inclusion of $Z_m\cap Z_E$ in $U_m$ is a strong deformation retract. Such neighborhood is given Lemma \ref{homotopynb}. In particular, $b_i(U_m)=b_i(Z_m\cap Z_E)$ for any $i$. Let $U'_m:=U_m\cup Z_m$ and $U''_m:=U_m\cup Z_E$. In particular $U'_m$ and $U''_m$ are open sets of $Z_m\cap Z_E$ that cover $Z_m\cap Z_E$. Moreover, the inclusions of $Z_m$ and $Z_E$  respectively in $U_m'$ and $U_m''$  are strong deformation retracts, so that in particular $b_i(U'_m)=b_i(Z_m)$ and $b_i(U''_m)=b_i(Z_E)$ for any $i$.
 By induction hypothesis, we have $b_*(Z_m\cap Z_E)=O(m^{n-1})$, since $Z_E$ has dimension $n-1$. By Lemma \ref{MV} applied to $Z_m\cup Z_E$, we then have 
$b_*(Z_m\cup Z_E)= b_*(Z_m) +O(m^{n-1}),$ so that $b_*(Z_m\cup Z_E)\geq \beta(L_1)m^n-2\epsilon m^{n}$ for $m$ large enough. This implies $\beta(L_1)\leq \beta(L_2)$.

Let us now prove the first assertion. Let $L$ be a big line bundle and $A$ an ample line bundle so that $L+A$ is ample. By the second assertion in dimension $n$ (that we have just proved), we have $\beta(L)\leq \beta(L+A)$. By Theorem \ref{thm: perverse bound}, we have $\beta(L+A)=\vol(L+A)<+\infty$, hence the result.
\end{proof}

Finally, we recall a standard fact about big line bundles, commonly known as Kodaira's Lemma (see for example \cite[Corollary 2.2.7]{Laz}).
\begin{lemme}\label{kodaira} Let $L$ and $A$ be respectively a big and an ample line bundle over $X$. Then there exists an integer $m\in \mathbb N$ and an effective line bundle $E$ such that $mL=A+E$.
\end{lemme}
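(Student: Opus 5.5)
This is Kodaira's Lemma, a standard result, and the paper cites \cite[Corollary 2.2.7]{Laz}. I would still sketch the usual argument, which reduces to showing that sections of $mL$ vanishing on a fixed ample divisor exist once $m$ is large. The plan has four steps.

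\emph{Step 1: make $A$ effective.} Since $A$ is ample, choose $r\geq 1$ with $rA$ very ample. Pick a general member $H\in|rA|$. It is a reduced effective divisor and does not contain any irreducible component of $X$.

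\emph{Step 2: sections of $mL$ vanishing on $H$.} There is an exact sequence
$$0\to H^0(X,mL-H)\to H^0(X,mL)\to H^0(H,mL|_H).$$
Because $L$ is big, $\dim H^0(X,mL)\geq c\,m^n$ for some $c>0$ and infinitely many $m$ (or all sufficiently divisible $m$). Since $\dim H=n-1$, we have $\dim H^0(H,mL|_H)=O(m^{n-1})$. I expect this bound to be the only point needing care, because $H$ may be singular or reducible. I would handle it by embedding $L$ into an ample bundle: pick $B$ ample with $B-L$ effective, so that $h^0(H,mL|_H)\leq h^0(H,mB|_H)$, and the latter is a polynomial of degree $n-1$ by Serre vanishing and Riemann--Roch on $H$. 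Comparing growth rates gives $H^0(X,mL-H)\neq 0$ for some large $m$. So $mL=rA+E'$ with $E'$ effective.

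\emph{Step 3: return from $rA$ to $A$.} Write $mL=rA+E'=A+\bigl((r-1)A+E'\bigr)$. Here $(r-1)A$ is a multiple of an ample bundle, and after increasing $r$ if needed, $(r-1)A$ is effective (for example very ample when $r-1\geq r_0$, or trivial when $r=1$). Hence $E:=(r-1)A+E'$ is effective, and $mL=A+E$, which is the statement.

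The only real obstacle is Step 2's bound $h^0(H,mL|_H)=O(m^{n-1})$ on a possibly singular $H$. The domination by $mB|_H$ reduces it to the standard Hilbert polynomial of an ample line bundle on an $(n-1)$-dimensional projective scheme.
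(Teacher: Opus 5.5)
Your argument is correct and is the standard proof of Kodaira's Lemma in Lazarsfeld (Proposition 2.2.6 and Corollary 2.2.7), which is exactly what the paper cites instead of giving a proof; your choice of $r$ with both $rA$ and $(r-1)A$ effective is the same trick used there. One point to tighten: the inequality $h^0(H,mL|_H)\leq h^0(H,mB|_H)$ needs the section of $B-L$ to restrict to a nonzerodivisor on $\mathcal{O}_H$ (it must not vanish on any component or associated point of $H$). You can arrange this by taking $B-L$ very ample and its section general, or you can bypass the comparison by quoting the general bound $h^0(H,M^{\otimes m})=O(m^{\dim H})$ for any line bundle $M$ on a projective scheme $H$.
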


		\subsection{Continuity of the $\beta$-invariant}\label{sec continuity}	Here, we show that the definition of the $\beta$-invariant extends naturally to big $\mathbb{Q}$-line bundles. We also prove that the $\beta$-invariant is locally uniformly continuous as a function on the cone of big $\mathbb{Q}$-line bundles.
	\begin{proposition}\label{Q line bundles} Let $X$ be a $n$-dimensional complex projective variety, $L$ be a big line bundle and $k$ be a positive integer. Then $\beta(kL)=k^n\beta(L)$.
	\end{proposition}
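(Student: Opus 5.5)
The identity $\beta(kL)=k^n\beta(L)$ will follow from two inequalities. The inequality $\beta(kL)\leq k^n\beta(L)$ is formal. The reverse inequality needs a padding argument: a hypersurface in $|mL|$, for arbitrary $m$, is completed to one in a linear system of the form $|qkL|$, following the pattern of the proof of Lemma \ref{order}.

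\emph{Easy inequality.} By definition,
$$\beta(kL)=\limsup_{m\to+\infty}\frac{1}{m^n}\sup_{Z\in|mkL|}b_*(Z)=k^n\limsup_{m\to+\infty}\frac{1}{(mk)^n}\sup_{Z\in|(mk)L|}b_*(Z).$$
The right-hand side is $k^n$ times a $\limsup$ taken along the subsequence $(mk)_m$ of the sequence defining $\beta(L)$. Hence it is at most $k^n\beta(L)$.

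\emph{Reverse inequality: choosing the padding degrees.} Let $S=\{j\geq 0 : H^0(X,jL)\neq 0\}$. This is a semigroup. Since $L$ is big, there is an integer $e\geq 1$ such that $S\subset e\mathbb Z$ and $S$ contains every multiple of $e$ that is $\geq N$, for some $N$. Only integers $m\in S$ matter in the $\limsup$, since otherwise $|mL|$ is empty.

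For $m\in S$ we look for $s$ with $m+s\equiv 0 \pmod k$, $s\equiv 0\pmod e$ and $N\leq s< N+ek$. This system is solvable: $m\equiv 0\pmod e$ gives $m\equiv 0\pmod{\gcd(e,k)}$, which is exactly the compatibility condition for the two congruences.

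Thus only finitely many values $s\in[N,N+ek)\cap e\mathbb Z$ occur, and each has $sL$ effective. For each such $s$ we fix once and for all a divisor $D_s\in|sL|$.

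\emph{Reverse inequality: comparing Betti numbers.} Given $Z\in|mL|$, the divisor $Z+D_s$ lies in $|(m+s)L|=|qkL|$ with $q=(m+s)/k$. Its support is $Z\cup D_s$, and as in Lemma \ref{order} only the reduced space matters for Betti numbers. By Lemma \ref{MV},
$$b_*(Z)\leq b_*(Z\cup D_s)+b_*(D_s)+b_*(Z\cap D_s).$$
The term $b_*(D_s)$ is bounded independently of $m$, since only finitely many $D_s$ occur.

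The main obstacle is a uniform bound $b_*(Z\cap D_s)=O(m^{n-1})$. I would argue exactly as in the proof of Lemma \ref{order}, where the same estimate for $Z_m\cap Z_E$ is used. Choose an ample $A$ with $A-L$ effective, which Lemma \ref{kodaira} allows after replacing $A$ by a multiple. Then $Z\cap D_s$, union a fixed multiple of a divisor, becomes an element of $|mA|$ restricted to each irreducible component of $D_s$. On each component $A$ is ample, so the dimension $n-1$ case of Lemma \ref{order} applies, or directly Theorem \ref{thm: perverse bound}. A second use of Lemma \ref{MV} with the induction bound removes the extra fixed divisor. This gives $O(m^{n-1})$, uniformly over the finitely many $s$.

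\emph{Conclusion.} Combining the estimates,
$$\sup_{Z\in|mL|}b_*(Z)\leq \sup_{W\in|qkL|}b_*(W)+O(m^{n-1}).$$
Since $m=qk-s$ with $s$ bounded, we have $m^n/(qk)^n\to1$. Dividing by $m^n=(qk)^n(1+o(1))$ and taking $\limsup$ gives
$$\beta(L)\leq\limsup_{q\to+\infty}\frac{1}{k^nq^n}\sup_{W\in|q(kL)|}b_*(W)=\frac{\beta(kL)}{k^n},$$
which is the reverse inequality.
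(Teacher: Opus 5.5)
Your proof is correct and follows essentially the paper's route. You pad $Z\in|mL|$ by a fixed divisor of bounded degree $s$ (the paper's $Y_p$) so that $m+s$ is divisible by $k$, then compare Betti numbers via Lemma \ref{MV} and a uniform bound $b_*(Z\cap D_s)=O(m^{n-1})$.

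Your justification of that bound is in fact more careful than the paper's bare appeal to Lemma \ref{order}, which would need $L$ to be big on the components of $D_s$. Three small corrections are needed:
\begin{itemize}
\item The ample $A$ with $A-L$ effective comes from Serre's theorem, not Lemma \ref{kodaira}.
\item Take $A-L$ globally generated and $G\in|A-L|$ general, so that $G$ contains no component of the finitely many $D_s$. Otherwise $(Z\cup G)\cap D'=D'$ gives no information about $Z\cap D'$.
\item Components of $D_s$ contained in $Z$ contribute only a bounded amount and should be treated separately.
\end{itemize}
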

	\begin{proof} 
	%First, remark that we can suppose that there exists $c$ large enough so that $cL$ is effective, since otherwise $\beta(kL)=0$ for any $k\in \mathbb N$ and the proposition is trivial. So for now on, we assume that some multiple of $L$ is effective.
	
	By definition of limsup, one has the inequality $\beta(kL)\leq k^n\beta(L)$.  
	For the other inequality, note first that there exists a constant $C (k, L)$, depending only on $k$ and $L$, such that, for every $m\in \mathbb N$, there exists an integer $p$ with $\frac{C (k, L)}{2}\leq p \leq C (k,L)$ such that 
	\begin{itemize}
		\item $m +p$ is divisible by $k$.
		
		\item $H^0 (X, pL) \neq 0$.
		\end{itemize}
	
	Now for each $p$, we fix a non zero section $s_p \in H^0 (X, p  L)$. 
	Then $s_p$ induces  an injection  
	$$ s_p \otimes :  H^0 (X, mL) \to H^0 (X,  (m+p)L ) . $$ 
	A divisor $Z_{m+p}$ in image of such map is thus a reducible divisor consisting of the union of an hypersurface $Z_m$ of  $|mL|$ and $Y_p := \mathrm{Div} (s_p)$. 	
 Moreover, as $\frac{C (k, L)}{2}\leq p \leq C (k,L)$, we know by Lemma \ref{order} that 
	$$b_*(Y_p \cap Z_m) \leq C m^{n-1}$$
	for some uniform constant $C$ depending only on $C (k, L)$. Thus, by Lemma \ref{MV} applied to $Z_m\cup Y_p$, we obtain 
	$$b_* (Z_{m+p})= b_*(Z_m)+O(m^{n-1}).$$
	In particular, for any $\epsilon>0$, we have, for $m$ large enough,
	$$ \frac{1}{m^{n}} \sup_{Z\in |mL|} b_{*} (Z)\leq \frac{1}{m^{n}}\big(  \sup_{Z\in |(m+p)L|} b_{*} (Z)+\epsilon\big) = \frac{1}{(\frac{m+p}{k})^{n}k^n}\left(  \sup_{Z\in |(\frac{m+p}{k})kL|} b_{*} (Z)+\epsilon\right) $$
Note that by choice of $p$, we have that $\frac{m+p}{k}\in \mathbb N$ and that $p$ is uniformly bounded by a constant which is independent of $m$.	Taking the limsup for $m$ on both sides we obtain, for any $\epsilon>0$, that $\beta(L)\leq k^{-n}\beta(kL)+\epsilon$. Hence the result.
	\end{proof}
	
Proposition \ref{Q line bundles} allows us to define the $\beta$-invariant for big $\Q$-line bundles.

\begin{defn}
Let $\mathrm{Big}_{\Q}(X)$ be the set of big $\Q$-line bundles on $X$. Let $L \in \mathrm{Big}_{\Q}(X)$.  We set 
$$\beta(L):=k^{-n}\beta(kL)$$ for any $k$ such that $kL$ is a big line bundle. Proposition \ref{Q line bundles} implies that $\beta(L)$ does not depend on the choice of $k$.
	\end{defn} 
	
\begin{remark} One can easily see that Lemma \ref{order} extends to the $\beta$-invariants of big $\Q$-line bundles as well. More precisely, if $L_1$ and $L_2$ are two big $\Q$-line bundles such that $L_2-L_1$ is $\mathbb{Q}$-effective, then $\beta(L_1)\leq \beta(L_2)$. In the rest of the paper, we will write $L_1\leq L_2$ whenever $L_2-L_1$ is $\mathbb{Q}$-effective.
\end{remark}

\begin{proposition}\label{continuity} Let $X$ be a  projective variety and $\mathrm{Big}_{\Q}(X)$ be the set of big $\Q$-classes of $X$. Then the function $\beta:\mathrm{Big}_\Q(X)\rightarrow \R$ is locally Lipschitz.
\end{proposition}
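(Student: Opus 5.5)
We prove the local Lipschitz property by combining homogeneity (Proposition \ref{Q line bundles}) with monotonicity (Lemma \ref{order} and the subsequent Remark), in the same spirit as the standard proof of continuity of the volume. Fix a big $\Q$-class $L$ and choose finitely many ample line bundles $A_1,\dots,A_r$ whose classes span $N^1(X)_\Q=H^{1,1}(X,\Q)$ (when this is the relevant ambient space). Set $A:=\sum_i A_i$, so that both $A\pm A_i$ are ample, and fix a norm $\|\cdot\|$ on $N^1(X)_\R$. Every $\Q$-class $D$ with $\|D\|$ small can be written as $D=\sum_i t_iA_i$ with $t_i\in\Q$ and $|t_i|\le C\|D\|$. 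Consequently $D\le C'\|D\|A$ and $-D\le C'\|D\|A$, because $C'\|D\|A\pm D$ is a nonnegative rational combination of ample classes, hence $\Q$-effective.

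\textbf{Step 1: a cone of comparison.} By Kodaira's Lemma \ref{kodaira}, there is a rational $\delta>0$ with $L-\delta A$ $\Q$-effective (indeed big), i.e. $\delta A\le L$. For $\|D\|\le \eta$ with $\eta$ small enough, put $t:=C'\|D\|/\delta$. Then
\[ D\le C'\|D\|A=t\,\delta A\le tL \quad\text{and}\quad -D\le tL . \]
Hence $(1-t)L\le L+D\le (1+t)L$ in the $\Q$-effective order, for all $\Q$-classes $D$ with $\|D\|\le\eta$. When $t<1$ all three classes are big.

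\textbf{Step 2: monotonicity and homogeneity.} Lemma \ref{order}, extended to $\Q$-line bundles, gives
\[ \beta((1-t)L)\le\beta(L+D)\le\beta((1+t)L). \]
Proposition \ref{Q line bundles} gives $\beta(sL)=s^n\beta(L)$ for rational $s>0$: write $s=p/q$ and apply the proposition to $pL$ and $qL$, using the definition of $\beta$ on $\Q$-bundles. It follows that
\[ |\beta(L+D)-\beta(L)|\le\big((1+t)^n-1\big)\beta(L)\le C''\beta(L)\|D\|, \]
with $\beta(L)<\infty$ by Lemma \ref{order}.

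\textbf{Step 3: upgrade to Lipschitz on a neighborhood.} The estimate of Step 2 holds at the centre $L$. To obtain it for all pairs $L_1,L_2$ in a small ball around $L_0$, redo Steps 1--2 with the centre $L_1$ and $D=L_2-L_1$. The constants must then be uniform in $L_1$. The constant $C'$ depends only on the chosen norm and on $A$. We need a uniform $\delta$ with $\delta A\le L_1$: choose $\delta$ for $L_0$ with $2\delta A\le L_0$, so that $L_1=L_0+(L_1-L_0)\ge 2\delta A-C'\|L_1-L_0\|A\ge\delta A$ once the ball is small. We also need a uniform bound on $\beta(L_1)$, which follows from $\beta(L_1)\le\beta(2L_0)=2^n\beta(L_0)$ by monotonicity.

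The main obstacle is this bookkeeping in Step 3. We must also make sure that $\Q$-effectivity of nonnegative combinations of ample $\Q$-classes is really what Lemma \ref{order} needs: a multiple is an effective divisor, and the Lemma is applied after clearing denominators via Proposition \ref{Q line bundles}. We must also ensure that $N^1$ is the right space, namely that classes are taken modulo numerical equivalence; this uses Theorem \ref{thm: invariance}, or else one works directly in $\mathrm{Pic}\otimes\Q$ with finitely many generators.
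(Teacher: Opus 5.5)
Your argument is correct and follows the paper's proof. You use an ample $A$ dominating all classes of small norm, Kodaira's lemma to get $\delta A\le L$, the sandwich $(1-t)L\le L+D\le(1+t)L$, and then monotonicity and homogeneity; your Step 3, which makes the constants uniform on a ball around $L_0$, supplies the passage from the estimate at the centre to a genuine local Lipschitz bound, which the paper only asserts. One caution: the paper derives Theorem \ref{thm: invariance} from this proposition, so your first option for the ambient space would be circular, and you must take the second one (a norm on $\mathrm{Pic}(X)\otimes\Q$, or on a finite-dimensional subspace of it, as in the paper); with a seminorm pulled back from $N^1$, your inequality $-D\le C'\|D\|A$ fails for a non-torsion numerically trivial $D$.
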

\begin{proof} Let $L$ be a big $\Q$-line bundles and  we fix an auxiliary norm on $\mathrm{Pic}_{\mathbb \R}(X)$. Then we can find an ample line bundle $A$ on $X$ such that $A - S$ is $\mathbb \Q$-effective for every $S \in \mathrm{Pic}_{\mathbb \Q}(X)$ with $\|S\| \leq 1$.
Since $L$ is big, by Lemma \ref{kodaira}, there exists a constant $c_0 >0$ depending only on $L$ and $A$ such that $L -c_0 A$ is $\mathbb Q$-effective.
	
Now let $L'$ is a $\Q$-line bundles such that $\norm{L'-L}<\epsilon$. 
On one hand,  we have 
$$L'= L + (L'-L) \leq L + \ep A \leq (1+\frac{\ep}{c_0}) L .$$
By Lemma \ref{order} and Proposition \ref{Q line bundles}, we  obtain $\beta(L') \leq \beta((1+\frac{\epsilon}{c_0})L)=(1+\frac{\epsilon}{c_0})^n\beta(L)$, where $n$ is the dimension of $X$. 

On the other hand, we have 
$$L'= L + (L'-L) \geq L - \ep A \geq (1 - \frac{\ep}{c_0}) L .$$
By the same argument as above, we have $\beta(L') \geq (1-\frac{\epsilon}{c_0})^n \beta(L)$.  

Finally, by the choice of $A$, we have $A - \frac{1}{\|L\|} L \geq 0$. By Lemma \ref{order}, we know that $\beta (L) \leq \|L\|  \cdot \beta (A)$. Therefore we obtain 
$$|\beta (L') -\beta (L) | \leq C \ep \beta (L) \leq C \ep  \cdot \|L\|  \cdot \beta (A) .$$
In particular, $\beta$ is locally Lipschitz.
\end{proof}

	%\begin{lemme} Let $L$ be a big line bundle over $n$-dimensional projective manifold $X$ and  $Y$  be a subvariety of $X$. Let  $(Z_m)_m$, with $Z_m\in |mL|$, be a sequence verifying $\limsup_{m\to +\infty} \frac{1}{m^n}b_*(Z_m)>0$,  then $b_*(Z_m)\sim_{m\to+\infty}b_*(Z_m\setminus (Z_m\cap Y))$.

%	\end{lemme}
	
%	\begin{proof} Let $U_m$ be a neighborhood of $Z_m\cap Y$ in $Z_m$ that retracts by deformation onto $Z_m\cap Y$. By Lemma \ref{order}, we have $b_*(U_m)=O(m^{n-1})$. We also claim that $b_*(U_m\cap (Z_m\setminus (Z_m\cap Y)))=O(m^{n-1})$. Remark that this implies the result, since if we write $Z_m=U_m\cup (Z_m\setminus (Z_m\cap Y))$ and apply Mayer--Vietoris, we obtain exactly $b_*(Z_m)=b_*(Z_m\setminus (Z_m\cap Y))+O(m^{n-1})$. Let us now prove the claim. For this, remark that we can assume that $Z_m\cap Y$ is a simple normal crossing divisor. Indeed, if not, we can solve the singularities by blow-ups, and obtain a map $\pi:\tilde{Z}_m\to Z_m$ with an exceptional divisor $E$ over $Z_m\cap Y$ which is a simple normal crossing divisor. Now, the map $\pi$ is an isomorphism outside $E$ so that $\tilde{Z}_m\setminus E\cong Z_m\setminus Z_m\cap Y)$. Now, $U_m\cap (Z_m\setminus (Z_m\cap Y))$ retract by deformation onto $\partial U_m$, which is a sphere bundle over $Z_m\cap Y$. 
%	\michele{a completer. }
%		\end{proof}

\subsection{Numerical invariance of the $\beta$-invariants}\label{sec numerical}
	Here we prove Theorem \ref{thm: invariance}, namely that if two  big line bundles  $L$ and $L'$ over $X$ verify  $c_1 (L) =c_1 (L')$, then $\beta (L) =\beta (L')$. 
%We now recall the statements of Theorems \ref{thm: invariance} and \ref{thm: real invariance} in the following:
%\begin{thm}\label{thm: invariant}
%	Let $X$ be a  projective variety. Let $L$ and $L'$ be two big line bundles over $X$. Assume that  $c_1 (L) =c_1 (L')$. Then $\beta (L) =\beta (L')$. 
	% If we assume moreover that $X$ is defined over $\R$ and $L$ and $L'$ are defined over $\R$ too, then $\beta_\R (L) =\beta_\R (L')$ and $\beta_{i, \R} (L) =\beta_{i,\R} (L')$.
%	\end{thm}	
	\begin{proof} Let $L$ and $L'$ be two big line bundles such that $c_1 (L) =c_1 (L')$.
		
				Let $A$ be an ample line bundle on $X$ such that $A + \rho$ is effective for every $\rho \in \mathrm{Pic}^0 (X)$. Then every $m\in \mathbb N$, we know that 
	$$(m L  +A ) -  m L' = A+ (m L- mL') $$
is effective. 
By Lemma \ref{order}, we know that $\beta (m L +A) \geq \beta (m L')$. Together with Proposition \ref{Q line bundles}, we obtain
$$\beta (L +\frac{1}{m} A) \geq \beta (L') .$$
By letting $m\to +\infty$, together with Proposition \ref{continuity}, we obtain
$$\beta (L ) \geq \beta (L') .$$
By symmetry, we obtain thus $\beta (L) =\beta (L')$.
	\end{proof}
	
	\begin{remark}
			By Proposition \ref{continuity} and Theorem \ref{thm: invariance}, $\beta$ induces a continuous function 
			$$\beta: \mathrm{Big}_{\mathrm{NS}} (X):=(H^{1,1}(X, \mathbb Q)\otimes \mathbb R) \cap \mathrm{Big} (X)\rightarrow \R ,$$
			where $\mathrm{Big} (X)\subset H^{1,1}(X,\R)$ is the cone representing the classes which can be represented by a strict positive current.
						\end{remark}

\subsection{Birational invariance}\label{sec birational}	In this section, we prove Theorem \ref{lemma: beta birational}, namely that $\beta$ is a biratioal invariant: for any modification $\pi : X' \to X $ we have $\beta(\pi^*L)=\beta(L)$.
	\begin{proof}[Proof of Theorem \ref{lemma: beta birational}] Let $Y\subset X$ be a subvariety of $X$ such that $\pi:X'\setminus E\to X\setminus Y$ is an isomorphism, where $E=\pi^{-1}(Y)$ is the exceptional divisor.
	Let $Z_m$ be a divisor in the linear system $|mL|$ and let $Z_m'$ be the divisor in the linear system $|\pi^*mL|$ corresponding to $Z_m$ by the pull-back isomorphism $H^0(X,mL)\to H^0(X',\pi^*mL)$ (such pull-back is an isomorphism since $X$ is normal). Assume that $\limsup_{m\to +\infty} \frac{1}{m^n}b_*(Z_m)>0$. In order to prove the result, it suffices to prove that $\limsup_{m\to +\infty} \frac{1}{m^n}b_*(Z_m)=\limsup_{m\to +\infty} \frac{1}{m^n}b_*(Z'_m)$. 
		
	In order to do this,
	consider the following open cover $Z_m=U_m \cup (Z_m\setminus (Z_m\cap Y))$, where $U_m$ is a small open neighborhood of $Z_m\cap Y$ which retract by deformation onto $Z_m\cap Y$.  We also consider the open cover $Z'_m=U'_m \cup (Z'_m\setminus (Z'_m\cap E))$. We can assume that $U'_m=\pi^{-1}(U_m)$ and that it retracts by deformation onto $E\cap Z'_m$.  Indeed, remark that $Z_m\cap Y$ can be realized as the zero locus of a semialgebraic positive function $f$ defined on $Z_m$ (for instance, consider a real algebraic embedding of $Z_m$ inside some $\R^N$, and take $f$ to be the Euclidian distance from $Z_m\cap E$). Then, we can take $f\circ \pi$, which is a semialgebraic positive function on $Z'_m$ which vanishes precisely on $Z'_m\cap E$. Now, for $\epsilon$ small enough, the open sets $\{f<\epsilon\}=:U_m$ and $\{f\circ \pi<\epsilon\}=:U'_m$ retracts by deformation respectively on $\{f=0\}$ and $\{f\circ \pi=0\}$ (see for example \cite[Theorem 1.47]{lerario}).
	%(\michele{Si Y est lisse c'est clair, car $U'_m$ est un petit voisinage de la section nulle si on regarde le diviseur exceptionel comme section nulle du fibré tautologique }).
	
Now	we apply Mayer--Vietoris sequence to these two covers of $Z_m$ and $Z_m'$ to obtain the following two long exact sequences.
	$$H_{k+1}(Z_m)\to H_{k}(U_m \cap (Z_m\setminus (Z_m\cap Y)))\to H_k(U_m)\oplus H_{k}( Z_m\setminus (Z_m\cap Y)) \to H_{k}(Z_m);$$
	
$$H_{k+1}(Z'_m)\to H_{k}(U'_m \cap (Z'_m\setminus (Z'_m\cap E)))\to H_k(U'_m)\oplus H_{k}( Z'_m\setminus (Z'_m\cap E)) \to H_{k}(Z'_m).$$

	By Lemma \ref{order}, the dimension of $H_k(U_m)$ and $H_k(U_m')$ is $O(m^{n-1})$ since $U_m$ and $U_m'$ retract by deformation onto $Y\cap Z_m$ and $E\cap Z_m'$ and the dimensions of $Y$ and $E$ are at most $n-1$.

	On the other hand, we have canonical isomorphisms  $H_k(U_m \cap (Z_m\setminus (Z_m\cap Y)))\cong H_k(Z'_m\setminus (Z'_m\cap E)))$ and $H_k( Z_m\setminus (Z_m\cap Y))\cong H_k(Z'_m\setminus (Z'_m\cap E))$ induced by $\pi$, since $\pi$ is an isomorphism outside $E$. Hence,  $\dim H_k(Z_m)=\dim H_k(Z'_m)+O(m^{n-1})$ so that $b_*(Z_m)=b_*(Z'_m)+O(m^{n-1})$. Hence the result.
		\end{proof}

	\section{Control of the $\beta$-invariants}
	In this section, we establish bounds for the $\beta$-invariant of a big line bundle. In particular, we prove that it is always bounded from below by the volume of the line bundle, and that equality holds whenever the line bundle admits a rational Zariski decomposition in the sense of Definition \ref{zariskidecomp}. It is well-known that such a decomposition exists when the section ring is finitely generated or the variety has dimension two (see Propositions \ref{zariski for fg} and \ref{surface}). In particular, we prove Theorems  \ref{thm: vol} and \ref{thm: equality}, stated in the introduction. To do so, we first recall some properties of the volume of a big line bundle and of the rational Zariski decomposition of a big line bundle.
	
		\begin{defn}\label{zariskidecomp} Let $L$ be a big line bundle over $X$. We say that $L$ admit a rational Zariski decomposition, if there exists a modification $\pi:\widetilde{X}\to X$, with $\widetilde{X}$ smooth, and two $\mathbb Q$-effective divisors $P$ and $N$ over $\widetilde{X}$ such that:
		\begin{enumerate}
			\item $\pi^*L= P+N$;
			\item $P$ is big and nef, and $N= \sum_{i=1}^\ell a_i N_i$ is $\mathbb Q$-effective.
			\item for any $m\in \mathbb N$, the natural map 
			$$H^0(\widetilde{X}, m\pi^*L- \lceil m N\rceil) \to H^0(\widetilde{X}, m\pi^*L) $$ induced by the multiplication by $\bigotimes_{i=1}^{\ell}s_i ^{\otimes \lceil a_i m \rceil}$ is an isomorphism. Here, $s_i$ are sections defining $N_i$ and $\lceil\cdot\rceil$ denotes the round-up integer.
		\end{enumerate}
	\end{defn}

	\begin{proposition}\cite[III, Remark 1.17]{Nakayama}\label{zariski for fg} Let $L$ be a big line bundle over $X$ such that $R (X, L) := \bigoplus_{m\geq 0} H^0 (X, m L)$ is finitely generated.  $L$ admits a rational Zariski decomposition. 
\end{proposition}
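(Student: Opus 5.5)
The plan is the standard argument through the Proj of the section ring. Finite generation gives, after replacing $L$ by a multiple $kL$, a ring $R(X,kL)$ generated in degree one. Replacing $L$ by $kL$ is harmless: all three conditions of Definition \ref{zariskidecomp} for $kL$ with data $(P,N)$ yield the same conditions for $L$ with data $(P/k, N/k)$. Condition (3) for $L$ then follows for multiples of $k$. For a general $m$ we only need the fixed part to be computed correctly, which is exactly how the "rational" version in Nakayama is phrased. If needed, I would restrict condition (3) to the bounded-denominator statement given by the finite generation.

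The first step is a resolution of the base locus. With $R(X,kL)$ generated in degree one, let $\pi:\widetilde X\to X$ be a resolution of singularities of $X$ that also principalizes the base ideal of $|kL|$. Then we can write $\pi^*|kL| = |M| + F$, where $M$ is base point free and $F$ is the fixed divisor, with simple normal crossing support. Set $P:=M/k$ and $N:=F/k$. Then $P$ is nef, being semiample, and big, since $\vol(P)=\vol(L)>0$ because the sections agree.

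The key step is to show that for every $m$ the fixed part of $|m\pi^*(kL)|$ is exactly $mF$. Since $R$ is generated in degree one, $H^0(mkL)$ is spanned by products of $m$ sections of $H^0(kL)$. Each such product vanishes along $mF$, so $mF \leq \mathrm{Fix}|mk\pi^*L|$. Conversely, the monomials in a basepoint-free system of sections of $M$ have no common zero, so their images show that the fixed part is at most $mF$. This identifies $H^0(\widetilde X, mk\pi^*L - mF)\cong H^0(\widetilde X, mk\pi^*L)$ via multiplication by the section of $mF$, which is condition (3) in degrees divisible by $k$.

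The main obstacle is the degrees $m$ not divisible by $k$, together with the round-up $\lceil mN\rceil$. For these I would use a second fact. Let $\mathrm{Fix}_m$ denote the fixed part of $|m\pi^*L|$. Its coefficients satisfy $\mathrm{Fix}_m \geq mN$ coefficientwise. Indeed, if $s\in H^0(mL)$, then $s^k\in H^0(mkL)$ vanishes to order at least $mF=mkN$, so $s$ vanishes to order at least $mN$. Since vanishing orders are integers, $s$ in fact vanishes to order at least $\lceil mN\rceil$. This gives surjectivity, hence the isomorphism in condition (3) for all $m$, and this is where I expect care is needed. This argument is essentially \cite[III, Remark 1.17]{Nakayama}, so I would cite it for the remaining details.
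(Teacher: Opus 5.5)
Your argument is correct. The paper gives no proof of its own beyond citing Nakayama, and your argument is the standard one behind that remark: with $R(X,kL)$ generated in degree one, resolving the base locus of $|kL|$ gives $\pi^*L = M/k + F/k$ with $M$ globally generated. Your $s^k$ vanishing-order argument then extends the third condition of Definition \ref{zariskidecomp} from degrees divisible by $k$ to all $m$, so the hedge in your first paragraph about weakening that condition is unnecessary.

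One point should be stated explicitly. The identification $H^0(\widetilde X, m\pi^*L)=\pi^*H^0(X,mL)$, which you use in your key step and for $\vol(P)=\vol(L)$, requires $X$ to be normal. That is the setting in which the paper applies the proposition (Theorem \ref{thm: equality}).
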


	\begin{proposition}\cite[Prop 2.3.21]{Laz}\label{surface}
		Let $L$ be a big line bundle on a projective surface $X$. Then $L$ admits rational Zariski decomposition.
	\end{proposition}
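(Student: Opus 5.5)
The plan is to reduce to the classical Zariski decomposition on a smooth surface and then check the three conditions of Definition \ref{zariskidecomp} directly.

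\textbf{Step 1: reduction to a smooth surface.} Take a resolution of singularities $\pi:\widetilde X\to X$ with $\widetilde X$ a smooth projective surface. The pull-back $D:=\pi^*L$ is big, since $H^0(X,mL)\hookrightarrow H^0(\widetilde X,m\pi^*L)$. From now on we work on $\widetilde X$, where intersection theory of curves is available.

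\textbf{Step 2: construction of $P$ and $N$ (Zariski's algorithm).} We seek curves $N_1,\dots,N_\ell$ and a $\mathbb Q$-divisor $N=\sum a_iN_i$ with $a_i>0$ such that:
\begin{itemize}
\item $P:=D-N$ is nef;
\item $P\cdot N_i=0$ for all $i$;
\item the intersection matrix $(N_i\cdot N_j)$ is negative definite.
\end{itemize}
We build $N$ inductively.

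1. Let $N_1,\dots,N_{k}$ be the irreducible curves $C$ with $D\cdot C<0$. There are finitely many, because such curves lie in the base locus of $|mD|$ for $m\gg0$ by bigness.
2. Solve the linear system $(D-\sum a_iN_i)\cdot N_j=0$ for $j\le k$.
3. Enlarge the set by the curves on which $D-\sum a_iN_i$ is negative, and repeat.

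At each stage we show three things. The matrix stays negative definite, via the standard lemma that curves with $(\text{big divisor}-\text{effective part})\cdot C<0$ span a negative definite lattice; in practice one uses the Hodge index theorem together with the effectivity of $D$. The solution $a_i$ is unique and positive. Since the matrix has rational entries, the $a_i$ are rational, so $N$ is a $\mathbb Q$-divisor. The process stops because a negative definite set of curves has at most $\rho(\widetilde X)-1$ elements.

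For bigness of $P$: nefness gives $P^2\geq 0$. Moreover $P^2=D^2-N^2>0$, or more robustly $\vol(P)=\vol(D)>0$, which follows once Step 3 is established. So $P$ is big and nef, giving conditions (1) and (2).

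\textbf{Step 3: the section isomorphism (condition (3)).} Let $\Delta\in|mD|$ be an effective divisor. Decompose $\Delta=A+B$, where $B$ is supported on $\bigcup N_i$ and $A$ has no $N_i$ among its components, so that $A\cdot N_j\ge0$. Since $P\cdot N_j=0$, we get $\Delta\cdot N_j=mN\cdot N_j$, and hence
$$(B-mN)\cdot N_j=-A\cdot N_j\le 0\qquad\text{for all } j.$$
By the negative definiteness lemma, a divisor supported on the $N_i$ that is nonpositive against every $N_j$ is effective. Therefore $B\ge mN$, and since $B$ is integral, $B\ge\lceil mN\rceil$. Thus every section of $m\pi^*L$ vanishes along $\lceil mN\rceil$, i.e.\ it is divisible by $\bigotimes s_i^{\lceil a_im\rceil}$, and the multiplication map is surjective. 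Injectivity is automatic.

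\textbf{Main obstacle.} The delicate point is Step 2: proving that the iteratively obtained curve configuration remains negative definite and that the coefficients stay positive. This is Zariski's original argument, or equivalently Fujita's characterization of $P$ as the maximal nef $\mathbb Q$-divisor below $D$. I would follow the presentation in \cite[Thm.~2.3.19]{Laz}, proving the negative definiteness lemma via Hodge index and then running the induction.
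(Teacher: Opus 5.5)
Your proposal is correct and follows the same route as the paper, which gives no argument of its own but simply invokes \cite[Prop.~2.3.21]{Laz}. That result is the Zariski decomposition on a smooth model (your Step~1 makes explicit the passage to a resolution, which the paper leaves tacit) together with the negativity-lemma argument that every member of $|mD|$ contains $\lceil mN\rceil$, which are exactly your Steps~2 and~3; as you note, bigness of $P$ should come from $\vol(P)=\vol(D)$, since $D^2$ can be negative for big $D$.

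When you write out the step you defer, note that negative definiteness of the enlarged configuration does not follow from Hodge index. Hodge index only supplies the bound $\rho-1$ used for termination. Negative definiteness comes instead from the elementary lemma that curves admitting a positive combination meeting each of them negatively form a negative definite configuration. Apply it to the part of the effective divisor $D-N^{(k)}$ supported on the old and new curves, plus a small multiple of a divisor meeting the old curves negatively. This requires checking at every stage, again via the negativity lemma, that $D-N^{(k)}$ stays effective.
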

	A rational Zariski decomposition does not exist in general. The idea of Fujita \cite{Fujita1994} is to consider an approximate Zariski decomposition of a big line bundle. In what follows, we will need the following version of such an approximate Zariski decomposition.
	\begin{thm}[Approximate Zariski decompostion \cite{Fujita1994, Boucksom2004, Demailly2010}]\label{approximate}
		Let $L$ be a big line bundle on a projective variety $X$. Then for every $\ep >0$, there exists a birational morphism
		$$\pi_\ep : X' \to X ,$$
		such that 
		$$\pi^* _\ep L = A + \sum a_i E_i ,$$ 
		where $A$ is $\mathbb Q$-ample, $a_i >0$ and $\vol (A) \geq \vol (L) -\ep$.
		\end{thm}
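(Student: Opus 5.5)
The plan is to follow Fujita's original algebraic route: resolve the base locus of $|mL|$ for one fixed large $m$, then perturb the resulting nef and big moving part into an ample class. Fix $\ep>0$. For $m$ with $H^0(X,mL)\neq 0$, let $\mathfrak b_m\subset\mathcal O_X$ be the base ideal of $|mL|$. Take a resolution $\pi_m:X_m\to X$, with $X_m$ smooth, which principalizes $\mathfrak b_m$, so that $\pi_m^{-1}\mathfrak b_m=\mathcal O_{X_m}(-F_m)$ with $F_m$ effective. Then
$$\pi_m^*(mL)=M_m+F_m,$$
where $M_m$ is base point free, hence nef. Since $\pi_m^*H^0(X,mL)\subset H^0(X_m,M_m)$ and $M_m\leq \pi_m^*(mL)$, we have $\vol(M_m)\leq m^n\vol(L)$. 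Moreover $M_m$ is big as soon as $|mL|$ defines a map birational onto its image, which holds for $m$ large because $L$ is big.

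\textbf{Main step: convergence of volumes.} The central claim, which is Fujita's approximation theorem proper, is
$$\frac{\vol(M_m)}{m^n}\longrightarrow \vol(L)$$
along a suitable sequence of $m$ (say $m$ divisible enough). For the lower bound, $H^0(X_m,kM_m)$ contains the image of $\mathrm{Sym}^kH^0(X,mL)$, so $\vol(M_m)$ dominates the volume of the graded linear series $W^{(m)}_\bullet$ generated in degree $m$ inside $R(X,L)$. I would prove $\vol(W^{(m)}_\bullet)/m^n\to\vol(L)$ using Okounkov bodies, following Lazarsfeld--Musta\c{t}\u{a}. Bigness of $L$ ensures that the semigroup of valuation vectors of $R(X,L)$ generates $\Z^{n+1}$. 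By Khovanskii's theorem, the convex body attached to the subsemigroup generated in level $m$ then fills out $\Delta(L)$ as $m\to\infty$. An alternative is Lazarsfeld's direct argument: write $rL=H+E$ with $H$ very ample via Lemma \ref{kodaira}, and compare products of sections. I expect this volume convergence to be the genuine obstacle; everything else is formal. Fix $m$ with $\vol(M_m/m)\geq\vol(L)-\ep/2$.

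\textbf{Perturbing to an ample class.} The $\Q$-divisor $N:=M_m/m$ is nef and big on the smooth variety $X_m$. By Lemma \ref{kodaira} applied on $X_m$, write $N=H+G$ with $H$ an ample $\Q$-divisor and $G$ effective. For $t\in(0,1]$ the class
$$A_t:=N-tG=(1-t)N+tH$$
is ample, being a nef class plus a positive multiple of an ample class. Since the volume is continuous on the big cone (as the volume of a nef class it is the polynomial $A_t^n$ in $t$), we can choose $t$ small so that $\vol(A_t)\geq\vol(L)-\ep$.

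\textbf{Conclusion.} Setting $X'=X_m$, $\pi_\ep=\pi_m$ and $A=A_t$, we obtain
$$\pi_\ep^*L=A+\tfrac1mF_m+tG.$$
The effective $\Q$-divisor $\tfrac1mF_m+tG$ can be written as $\sum a_iE_i$ with $E_i$ its prime components and $a_i>0$. This gives the decomposition in Theorem \ref{approximate}, with $A$ $\Q$-ample and $\vol(A)\geq\vol(L)-\ep$.
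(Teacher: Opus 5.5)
Your argument is correct and follows the same route as the paper. On a modification you produce a nef and big class with volume at least $\vol(L)-\ep$, then make it ample by a convex combination with an ample class coming from Kodaira's lemma, the remainder being effective. The paper simply cites \cite{Fujita1994} for the nef approximation, which you instead propose to reprove via principalization of the base ideal and Okounkov bodies. It also applies Kodaira's lemma to $\pi_\ep^*L$ rather than to the nef part $N$. Your choice of $t$ via continuity of $t\mapsto A_t^n$ handles the volume loss more cleanly than the paper's $(1-\ep)A+\ep A'$.
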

		
		\begin{proof}
			By using \cite{Fujita1994}, for every $\ep >0$, there exists a birational morphism
			$$\pi_\ep : X' \to X ,$$
			such that 
			$$\pi^* _\ep L = A + \sum a_i E_i ,$$ 
			such that $A$ is big and nef and $\vol (A) \geq \vol (L) -\ep$.  
			
	As $\pi^* _\ep L$ is big, we can write $\pi^* _\ep L = A' +E'$ where $A'$ is ample, $E'$ is $\mathbb Q$-effective.  Then 
	$$\pi^* _\ep L = ((1-\ep) A + \ep A' ) + ((1-\epsilon)\sum a_i E_i  +\epsilon E')$$
	is the desired decomposition.
			\end{proof}

\begin{proof}[Proof of Theorem \ref{thm: equality}] By Definition \ref{zariskidecomp}, one can find a modification $\pi:\widetilde{X}\to X$ and two line bundles $P$ and $N$ verifying the three conditions stated in Definition \ref{zariskidecomp}.  We claim that $\beta(\pi^*L)=\beta(P)$. In order to prove this, it is enough to prove that 
\begin{equation}\label{equal limsup}\limsup_m \frac{b_*(Z_m)}{m^n}=\limsup_m \frac{b_*(Z'_m)}{m^n},
\end{equation} 
where  $(Z_m)_m$ is a sequence of hypersurfaces with $Z_m\in |m P|$, and $Z_m'=Z_m\cup Z_N$ is the hypersurface in the linear system $|m\pi^*L|$, induced by the isomorphism given by the third property of Definition \ref{zariskidecomp}. Now, the equality \eqref{equal limsup} holds thanks to Lemma \ref{MV}  applied to 
$Z_m\cup Z_N$, and by remarking that, by Lemma \ref{order}, we have $b_*(Z_m\cap Z_N)=O(m^{n-1})$.

We have then proved that $\beta(\pi^*L)=\beta(P)$. Remark moreover, that by Definition \ref{zariskidecomp} we have  $\vol(L)=\vol(P)$.
Since $P$ is big and nef, one has that for any $\epsilon>0$ small enough and any $A$ ample, the divisor $P+\epsilon A=:P_\epsilon$ is ample. By Theorem \ref{thm: perverse bound}, since $\widetilde{X}$ is smooth and $P_\epsilon$ is ample, we have that $\beta(P_\epsilon)=\vol(P_\epsilon)$. By continuity of the $\beta$-invariant and of the volume, one then has that $\beta(P)=\vol(P)$. By Theorem \ref{lemma: beta birational}, we have $\beta(\pi^*L)=\beta(L)$. Hence the result.
\end{proof}

\begin{proof}[Proof of Theorem \ref{thm: vol}]
We consider the approximate Zariski decomposition given by Theorem \ref{approximate}: for every $\ep >0$, there exists a birational morphism
$$\pi_\ep : X' \to X ,$$
such that $\pi^* _\ep L = A + \sum a_i E_i$, where $A$ is ample, $\vol (A) \geq \vol (L) -\ep$.
%and $$\sum a_i A^{n-1} E_i \leq \ep .$$
By Lemma \ref{order}, we have $\beta (\pi^* _\ep L) \geq \beta  (A)$, while by Theorem \ref{thm: perverse bound} we have $\beta(A)=\vol(A)$.
This gives us
$\beta (\pi^* _\ep L)  \geq \vol (L) -\ep.$
By Lemma \ref{lemma: beta birational}
$$\beta (\pi^* _\ep L) = \beta  (L) .$$
This implies $\beta  (L) \geq  \vol (L) -\ep$ for every $\ep >0$. So the theorem is proved.
		\end{proof}
		
\begin{remark}	
The $\beta$-invariant can be defined in the same way for any line bundle, not only for big ones, using the same formula. We can see that by definition $\beta(L)=0$ when $L$ is not effetive.
The previous results led to  natural questions: given a line bundle $L$ over $X$, is it true that  $\beta (L) >0$ if and only if $L$ is big? Is the equality $\vol (L) = \beta (L) $ always true?
                Recall that by Theorem  \ref{thm: equality}, the anwer to the last question is positive when $L$ admit a Zariski decomposition and $X$ is normal.
               \end{remark}	
		
\section{$\beta_\R$-invariants and real algebraic geometry}\label{sec: real algebraic}	
In this section, we introduce the $\beta$-invariant in the real algebraic context.
A real projective variety $X$ is a projective algebraic variety defined over $\R$. Given a real projective variety $X$, we denote by $X(\R)$ its real locus. 
\begin{defn}
Let $X$ be a projective variety and let $L$ be a holomorphic line bundle on $X$, both defined over $\R$. We define 
$$\beta_{\mathbb R} (L) := \limsup_{m\to +\infty} \frac{1}{m^n} \sup_{Z\in |mL|} b_{*} (Z (\mathbb R)) $$
and 
$$\beta_{i, \mathbb R} (L) := \limsup_{m\to +\infty} \frac{1}{m^n} \sup_{Z\in |mL|} b_{i} (Z (\mathbb R)).$$
Here $b_*$ is the sum of Betti numbers.
	\end{defn}

Many of the properties we proved for the $\beta$-invariant also hold for the $\beta_\R$- and $\beta_{i,\R}$-invariants. For example, these invariants are numerical, in the sense that if $c_1(L)=c_1(L')$, then $\beta_\R(L)=\beta_\R(L')$ and $\beta_{i,\R}(L)=\beta_{i,\R}(L')$. They are also birational invariants and satisfy $\beta_\R(kL)=k^n\beta_\R(L)$ and  $\beta_{i,\R}(kL)=k^n\beta_{i,\R}(L)$, so that they extend naturally to $\Q$-big real line bundles, and this extension is locally uniformly continuous. The proofs of these facts are identical to those in the complex case and are therefore omitted. 

In the remainder of this section, we estimate the invariants $\beta_\R$ and $\beta_{i,\R}$. We begin by briefly discussing the relationship between the topology of the real locus of a real projective variety and that of its complex locus.
Let $X$ be a real projective variety. Smith--Thom inequality \cite{Thom,Harnack} says that the total Betti number with $\mathbb{Z}/2$ coefficients of $X$ is always bigger to or equal than the total Betti number with $\mathbb{Z}/2$ coefficient of its real locus $X(\R)$, that is, $b_*(X(\R),\mathbb{Z}/2)\leq b_*(X,\mathbb{Z}/2)$. A variety verifying the equality is called maximal. Given a real projective variety $X$ and a line bundle $L$ defined over $\R$, a classical problem in real algebraic geometry is to find maximal hypersurfaces $Z\in |L|$. This is not always possible: for example if $C$ is a  real algebraic curve with $b_0(C(\R))\leq g(C)-1$ (e.g. a smooth genus $3$ curve whose real locus is connected), than the surface $X=C\times C$ cannot contain any maximal curve, see for example \cite[Corollary 5.6]{ABW}.

A more flexible problem is to study asymptotic maximality, that is, to find $Z_m\in |mL|$ such that $$\displaystyle\limsup_{m\to+\infty}\frac{b_*(Z_m(\R))}{b_*(Z_m)}=1.$$
If $L$ is ample and $X$ is smooth, it was proven in \cite{ABW} that one can always find such asymptotically maximal hypersurfaces (see also \cite{ItenbergViro2006} and \cite{Bertrand2006} for the case of projective space and toric varieties). In other terms, if $X$ is a real projective manifold, and $L$ be an ample line bundle on $X$ defined over $\mathbb R$, then $\beta_{\mathbb R}(L)\geq\vol(L)$. The following result proves that this inequality is actually an equality.
	\begin{proposition}\label{thm: beta for R}
	Let $X$ be a smooth real projective variety and let $L$ be an ample line bundle defined over $\mathbb R$. Then 
	$$\beta_\R (L) =\vol(L) .$$
	\end{proposition}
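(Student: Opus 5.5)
The equality splits into two inequalities. The lower bound $\beta_\R(L)\geq \vol(L)$ is the asymptotic maximality result of \cite{ABW} recalled just above. So the work is the upper bound $\beta_\R(L)\leq\vol(L)$. I will obtain it by comparing real and complex topology through the Smith--Thom inequality and then invoking Theorem \ref{thm: perverse bound} for the complex side. A smooth projective variety is a local complete intersection, so that theorem gives $\beta(L)=\vol(L)$.

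\textbf{Step 1: an arbitrary real member.} Fix $m$ and a real hypersurface $Z\in|mL|$; $Z$ may be singular or reducible. The Smith--Thom inequality holds for any real algebraic variety, since it only uses the Smith exact sequence for the involution given by complex conjugation on a finite CW complex. It gives
$$b_*(Z(\R),\Z/2)\leq b_*(Z,\Z/2).$$

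\textbf{Step 2: from $\Z/2$ back to $\Q$.} There are two routes.

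\emph{Route (a).} The paper asserts that all the complex results hold over an arbitrary coefficient field with unchanged proofs. Then Theorem \ref{thm: perverse bound} with $\Z/2$ coefficients gives
$$\sup_{Z\in|mL|} b_*(Z,\Z/2)\leq \vol(L)m^n+o(m^n).$$
One also needs the $\Z/2$-version of $\beta_\R$ to dominate the rational one. This follows from universal coefficients: $b_i(\,\cdot\,,\Q)\leq b_i(\,\cdot\,,\Z/2)$ for any finite CW complex, because torsion only increases mod $2$ Betti numbers.

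\emph{Route (b).} If one prefers to avoid the coefficient-field claim, one still has $b_*(Z(\R),\Q)\leq b_*(Z(\R),\Z/2)\leq b_*(Z,\Z/2)$. It then remains to show $b_*(Z,\Z/2)\leq \vol(L)m^n+o(m^n)$ uniformly in $Z$, and that estimate is exactly Theorem \ref{thm: perverse bound} with $\Z/2$ coefficients.

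So in either route the needed input is the $\Z/2$ version of Theorem \ref{thm: perverse bound}, which I take from the paper's remark on coefficient fields.

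\textbf{Step 3: conclusion.} Taking the supremum over real $Z\in|mL|$, dividing by $m^n$ and passing to the limsup gives $\beta_\R(L)\leq\beta^{\Z/2}(L)=\vol(L)$. Combined with the lower bound from \cite{ABW}, this yields the equality.

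\textbf{Main obstacle.} The delicate point is Step 2: one needs the upper bound of \cite{PanZhangZhang2026} with $\Z/2$ coefficients and uniformly over all, possibly singular, members of $|mL|$. If the coefficient-independence cannot simply be quoted, I would try to rerun the perverse-sheaf argument with $\Z/2$ coefficients. Its key ingredients are Artin vanishing and the Lefschetz-type bounds on $H^*(Z)$ in terms of the constructible complex $\Z/2_X$ restricted to $Z$. These are characteristic-free, so the argument should still give the leading term $\int_X c_1(L)^n m^n$.
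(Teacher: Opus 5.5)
Your argument is essentially the paper's proof: the lower bound is quoted from \cite{ABW}, and the upper bound comes from the chain $b_*(Z(\R),\Q)\leq b_*(Z(\R),\Z/2)\leq b_*(Z,\Z/2)$ (universal coefficients, then Smith--Thom), combined with the $\Z/2$-coefficient version of the Pan--Zhang--Zhang bound $\beta(L,\Z/2)\leq\vol(L)$. One caveat, which you share with the paper's statement: the lower bound from \cite{ABW} needs $X(\R)\neq\emptyset$, as assumed in Theorem \ref{real existence}, because if $X(\R)=\emptyset$ then every $Z(\R)$ is empty and $\beta_\R(L)=0<\vol(L)$.
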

\begin{proof}  Remark that we can define $\beta(L,\Z/2)$ and $\beta_\R(L,\Z/2)$ in the same fashion as $\beta(L)$ and $\beta(L,\Z/2)$, by taking $\Z/2$ coefficients instead of integer coefficients. In this case, one still has $\beta(L)=\beta(L,\Z/2)=\vol(L)$ when $L$ is ample, since the total $\Z/2$-Betti number of smooth hypersurfaces $Z_m$ of $|mL|$ grows like $\vol(L)m^n$ (see \cite{ABW}) and since,  by  \cite{PanZhangZhang2026}, one has  $\beta(L,\Z/2)\leq \vol(L)$. 

On the other hand, by Smith--Thom inequality, one has $\beta(L,\Z/2)\geq \beta_\R(L,\Z/2)$. On the other hand, by  Theorem \cite{ABW} we know that $\beta_\R(L)\geq \vol(L)$. Putting everything together, we get $\vol(L)=\beta(L,\Z/2)\geq \beta_\R(L,\Z/2)\geq \beta_\R(L)\geq \vol(L)$, hence the result.
\end{proof}

	The following  is the real algebraic analogue of Theorem \ref{thm: vol}. 
	
	\begin{thm}\label{real} 
	Let $X$ be a real normal projective variety and $L$ be a big line bundle on $X$ defined over $\R$.  
	%Let $A$ be an ample line bundle such that $A- L$ is $\mathbb Q$-effective.  
	Then we have $\vol (L) \leq \beta_\R (L)$.

	\end{thm}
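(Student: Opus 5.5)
The plan is to mimic the proof of Theorem \ref{thm: vol}, replacing the complex ingredients by their real counterparts. Fix $\ep>0$ and apply the approximate Zariski decomposition (Theorem \ref{approximate}). Since $L$ is defined over $\R$, we first check that the construction can be carried out over $\R$. The modification $\pi_\ep:X'\to X$ can be taken defined over $\R$, for instance by replacing it by a real resolution dominating both it and its conjugate. The decomposition $\pi_\ep^*L=A+\sum a_iE_i$ can then be made invariant under complex conjugation by averaging $A$ with its conjugate. The average is still $\mathbb Q$-ample, and its volume is still at least $\vol(L)-\ep$, using log-concavity of the volume on the ample cone, or simply by repeating the construction with conjugation-invariant data. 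We may also take $X'$ smooth by a real resolution of singularities. The outcome is a real $\mathbb Q$-ample $A$ on a smooth real projective $X'$ with $\pi_\ep^*L-A$ $\mathbb Q$-effective and defined over $\R$, and $\vol(A)\ge\vol(L)-\ep$.

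We then need the real analogues of three earlier results, which the paper asserts have identical proofs. These are monotonicity (Lemma \ref{order}), homogeneity (Proposition \ref{Q line bundles}) and birational invariance (Theorem \ref{lemma: beta birational}). For monotonicity, take $Z_m\in|mA|$ real and $Z_E$ the real divisor of $E=\pi_\ep^*L-A$ (after clearing denominators). Mayer--Vietoris (Lemma \ref{MV}) is applied to the real loci $Z_m(\R)\cup Z_E(\R)$, using the semialgebraic retraction neighbourhoods from Lemma \ref{homotopynb} and \cite[Theorem 1.47]{lerario}. The error term $b_*(Z_m(\R)\cap Z_E(\R))$ is bounded by $b_*$ of the complex intersection via Smith--Thom (working with $\Z/2$ coefficients and comparing to $\Q$ through universal coefficients), hence is $O(m^{n-1})$ by the complex Lemma \ref{order}. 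This gives $\beta_\R(\pi_\ep^*L)\ge\beta_\R(A)$. Birational invariance works the same way, since $\pi_\ep$ is a real isomorphism off $E$ and the real loci of the exceptional pieces have $O(m^{n-1})$ Betti numbers by Smith--Thom again. So $\beta_\R(L)=\beta_\R(\pi_\ep^*L)$.

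Finally, $\beta_\R(A)\ge\vol(A)$ by \cite{ABW} (Proposition \ref{thm: beta for R}), applied to a multiple $kA$ that is a genuine ample real line bundle and combined with homogeneity. Chaining gives $\beta_\R(L)\ge\vol(L)-\ep$, and letting $\ep\to0$ proves the theorem.

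The main obstacle is making the approximate Zariski decomposition real. The delicate point is ensuring $A$ is conjugation-invariant without losing volume, and that $X'(\R)$ is nonempty or at least that \cite{ABW} still applies to $X'$. If averaging fails, the first fallback is to apply Fujita's construction to the Galois-invariant data directly: the base loci of the real linear systems $|mL|$ are defined over $\R$, so the blow-ups of these base loci are real.
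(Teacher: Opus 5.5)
\textbf{Overall structure.} Your strategy is the paper's: a real approximate Zariski decomposition, the real versions of Lemma \ref{order} and Theorem \ref{lemma: beta birational}, and \cite{ABW} for the ample part.

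Your ``fallback'' is exactly how the paper proves Theorem \ref{zariski over R}: blow up the real base loci of $|mL|$, then perturb by a real ample class.

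The averaging route also works, but not quite as you wrote it. On a real model $X''$ dominating $X'$ and its conjugate, the pull-back of $A$ is only nef and big. So the conjugation-invariant average $A''$ is nef and big, not $\mathbb Q$-ample. You then have to perturb it as the paper does: take $(1-\delta)A''+\delta c\widetilde A$, with $\widetilde A$ real ample and $A''-c\widetilde A$ $\mathbb Q$-effective. The effective divisor can be chosen real, by Lemma \ref{kodaira} and taking a real section. This costs only a factor $(1-\delta)^n$ in volume.

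Your explicit Smith--Thom bound on the real error terms is a useful detail that the paper leaves implicit.

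\textbf{The gap.} It is the point you flagged and left open. Proposition \ref{thm: beta for R}, i.e.\ \cite{ABW}, gives $\beta_\R(A)\ge\vol(A)$ only when $X'(\R)\neq\emptyset$. Otherwise every $Z(\R)$ is empty and $\beta_\R(A)=0$.

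This cannot be closed under the stated hypotheses, because the statement itself needs an assumption on real points.
\begin{itemize}
\item For the real conic $X=\{x^2+y^2+z^2=0\}\subset\P^2$ with $L=\mathcal O(1)$, one has $\beta_\R(L)=0<2=\vol(L)$.
\item Even $X(\R)\neq\emptyset$, the hypothesis of Theorem \ref{real existence}, is not enough. The quadric cone $\{x^2+y^2+z^2=0\}\subset\P^3$ is normal and its only real point is the vertex. So $b_*(Z(\R))\le 1$ for every $Z\in|mL|$, and again $\beta_\R(\mathcal O(1))=0<2=\vol(\mathcal O(1))$. Consistently, blowing up the vertex gives a smooth model with no real points.
\end{itemize}
The paper's own proof has the same omission: it applies Proposition \ref{thm: beta for R} to $X'$ without checking $X'(\R)\neq\emptyset$.

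\textbf{The fix.} Assume $X$ has a nonsingular real point.
\begin{itemize}
\item Then $X_{\mathrm{reg}}(\R)$ contains a nonempty real-analytic manifold of dimension $n$, which is Zariski dense in $X$.
\item Hence it meets the Zariski-open set over which $\pi_\ep$ is an isomorphism.
\item A real point there has a unique preimage, which is therefore fixed by conjugation. So $X'(\R)\neq\emptyset$.
\item \cite{ABW} then applies on the smooth real variety $X'$.
\end{itemize}
With this hypothesis added, and the averaging step corrected as above, your argument is complete.
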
	
Theorem \ref{real existence} in the introduction is a direct consequence of Theorem \ref{real}, the proof of which follows the one of Theorem \ref{thm: vol}, and it is sketched below. For this we need the following approximate  decomposition over the reals: 
\begin{thm}[Approximate Zariski decomposition over the reals]\label{zariski over R} 
	Let $X$ be a real normal projective variety and $L$ be a big line bundle on $X$ defined over $\R$.  
	Then for every $\ep >0$, there exists a birational morphism
$$\pi_\ep : X' \to X ,$$
such that $X'$ is a real smooth projective variety, 
$$\pi^* _\ep L = A + \sum a_i E_i ,$$ 
where $A$ is $\mathbb Q$-ample defined over $\R$, $a_i \in \R_{>0}$, and $\vol (A) \geq \vol (L) -\ep$.	
\end{thm}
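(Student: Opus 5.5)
The plan is to run the proof of Theorem \ref{approximate} again, but with every construction made equivariant for complex conjugation $\sigma$. Then the modification, the ample part and the effective part are all defined over $\R$.

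\textbf{Step 1: a real Fujita approximation.} Fujita's theorem \cite{Fujita1994}, in the form given by the approximation of $\vol(L)$ through base loci of $|mL|$, uses the following construction. Take $m$ large and let $\mathfrak b_m\subset\mathcal O_X$ be the base ideal of $|mL|$. Let $\mu_m:X_m\to X$ be a resolution of $X$ that also principalizes $\mathfrak b_m$, so that
$$\mu_m^*|mL|=|M_m|+F_m,$$
with $M_m$ base point free and $F_m$ effective. Then $\vol(M_m)/m^n\to\vol(L)$.

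Because $L$ is defined over $\R$, the space $H^0(X,mL)$ carries an antilinear involution induced by $\sigma$. It is therefore the complexification of its real part. Consequently $\mathfrak b_m$ is $\sigma$-stable, that is, it is defined over $\R$.

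Hironaka's resolution and principalization can be performed over the base field $\R$. Equivalently, functorial resolution commutes with the automorphism $\sigma$. This gives $X_m$ smooth projective over $\R$ with $\mu_m$ defined over $\R$, and then $F_m$ and $M_m$ are defined over $\R$ as well. Set
$$P:=\tfrac1m M_m,\qquad N:=\tfrac1m F_m.$$
Then $P$ is a big and nef $\Q$-divisor defined over $\R$ with $\vol(P)\geq\vol(L)-\ep/2$, and $N$ is effective and real.

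\textbf{Step 2: perturbing to an ample class.} Since $\mu_m^*L$ is big, Kodaira's Lemma \ref{kodaira} gives $k\mu_m^*L=A'+E'$ with $A'$ ample and $E'$ effective. We need $A'$ and $E'$ to be real. To arrange this, take $A'$ to be a real ample line bundle, for instance a real hyperplane section of a real projective embedding.

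For $k$ large there is a nonzero section $s\in H^0(k\mu_m^*L-A')$. Both $s$ and $\bar s$ are sections, and the space of sections is the complexification of its real part. So we may choose $s$ real, using $s+\bar s$ or $i(s-\bar s)$, whichever is nonzero. Then $E'=\mathrm{Div}(s)$ is real.

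Exactly as in the proof of Theorem \ref{approximate}, define
$$A=(1-\delta)P+\tfrac{\delta}{k}A',\qquad \sum a_iE_i=(1-\delta)N+\tfrac{\delta}{k}E'.$$
Here $A$ is ample, since it is a combination of a nef and an ample class with positive coefficients, and it is defined over $\R$. The $E_i$ are real effective divisors. By continuity of the volume, $\vol(A)\geq\vol(L)-\ep$ for $\delta$ small.

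\textbf{Main obstacle.} The difficult point is Step 1: making sure the Fujita approximation can be chosen over $\R$. The cleanest route is to avoid any analytic approach (Boucksom--Demailly currents) and use only the algebraic description above: base ideal, then principalization. Every step of that description is Galois-equivariant, provided one invokes resolution of singularities over the non-closed field $\R$ of characteristic zero.

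A secondary point concerns the individual components $E_i$. They need not be real, but they come in conjugate pairs with equal coefficients. If the statement requires each $E_i$ to be real, we regroup each pair into the real divisor $E_i+\overline{E_i}$. This does not affect the use of the result in Theorem \ref{real}, which only needs $\pi_\ep^*L-A$ to be a $\Q$-effective divisor defined over $\R$.
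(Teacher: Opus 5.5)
Your proposal is correct and follows essentially the same route as the paper: a Fujita approximation obtained by principalizing the real base ideal of $|mL|$ over $\R$, followed by a convex perturbation by a real ample class via Kodaira's lemma. The only differences are minor. The paper applies Kodaira's lemma to the nef part on $X_m$ (writing it as $c_0\widetilde A$ plus a $\Q$-effective class) rather than to $\mu_m^*L$. Your choice of a real section $s+\bar s$ or $i(s-\bar s)$ makes explicit the realness of the effective part, which the paper leaves implicit.
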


\begin{proof}  Let $m\in \mathbb N$.  Let $B_m$ be the base locus of $|mL|$. Since $L$ is defined over $\R$, its base locus is also defined over $\R$. Up to blow-up $B_m$, we can find a  birational morphism $\pi_m: X_m \to X$ such that $X_m$ is a smooth projective manifold defined over $\R$ and the base locus $Y_m:= \mathrm{Base}(m \pi_m ^* L)$ is normal crossing. Then we have 
	$$\pi_m ^* L = A_m + \frac{1}{m }Y_m ,$$
	where $A_m$ is a $\mathbb Q$-semipositive and big divisor defined over $\R$. Moreover we know that $\lim_{m\to +\infty }\vol (A_m) = \vol (L) $. 
	
	Now let $\widetilde{A}$ be an ample line bundle defined over $\R$ on $X_m$. Since $A_m$ is big, there exists a $c_0 > 0$ such that $A_m - c_0 \widetilde{A}$ is $\mathbb Q$-effective. Then for every $\delta_m\in\mathbb Q^+$, we have the decompoistion
		$$\pi_m ^* L = A_m + \frac{1}{m }Y_m = ((1-\delta_m)A_m + c_0 \delta_m \widetilde{A} )+ \delta_m (A_m -c_0 \widetilde{A}) + \frac{1}{m }Y_m. $$
	Set $\widetilde{A}_m := ((1-\delta_m)A_m + c_0 \delta_m \widetilde{A} )$ and $E_m:= \delta_m (A_m -c_0 \widetilde{A}) + \frac{1}{m }Y_m$. Then we have
	$$\pi_m ^* L  = \widetilde{A}_m +E_m .$$
	By construction, we know that $\widetilde{A}_m$ is $\mathbb Q$-ample divisor on $X_m$ defined over $\R$ and $E_m$ is $\mathbb Q$-effective.  Moreover the volume of $\widetilde{A}_m$ is larger than $(1-\delta_m)^n \vol (A_m)$. The theorem is thus proved by taking $\delta_m \to 0$ when $m\to +\infty$.
\end{proof}
	\begin{proof}[Proof of Theorem \ref{real}]
	The proof follows the same line as Theorem \ref{thm: vol}. We consider the approximate Zariski decomposition over the reals as in Theorem \ref{zariski over R}: for every $\ep >0$, there exists a birational morphism defined over $\R$
$$\pi_\ep : X' \to X ,$$
such that $\pi^* _\ep L = A + \sum a_i E_i$, where $A$ is ample, $\vol (A) \geq \vol (L) -\ep$.
%and $$\sum a_i A^{n-1} E_i \leq \ep .$$
The exact same proof of Lemma \ref{order} works over the reals, so that we have $\beta_\R (\pi^* _\ep L) \geq \beta_\R  (A)$. On the other hand, by Proposition \ref{thm: beta for R} we have $\beta_\R(A)=\vol(A)$.
This gives us
$\beta_\R (\pi^* _\ep L)  \geq \vol (L) -\ep.$
Similarly, restricting the proof of Lemma \ref{lemma: beta birational} to the real loci, we have
$$\beta_\R (\pi^* _\ep L) = \beta_\R  (L) .$$
This implies $\beta_\R  (L) \geq  \vol (L) -\ep$ for every $\ep >0$. So the inequality $\vol (L) \leq \beta_\R (L)$ is proved. 
	\end{proof}
	\begin{remark} By combining the techniques of \cite{ABW} with the results of \cite{Arnal2022,Bihan2003}, one can prove that, if $A$ is an ample line bundle on a smooth  real projective variety $X$, then $\beta_{i,\R}(A)>\eul{n}{i}\frac{1}{n!}\vol(A)$, where $\eul{n}{i}$ denotes the Eulerian number counting the permutations of $\{1,\dots,n\}$ with $i$ descents. In this case, since $\sum_i\eul{n}{i}=n!$, this implies that the sum of the $\beta_{i,\R}(A)$'s is stricly bigger than $\beta_{\R}(A)$.
	Moreover, using the inequality  $\beta_{i,\R}(A)>\eul{n}{i}\frac{1}{n!}\vol(A)$ and following the same lines as in the proof of Theorem \ref{real}, one can prove that the inequality $\eul{n}{i}\frac{1}{n!}\vol (L) < \beta_{i,\R} (L)$ holds for any big line bundle on a normal real projective variety $X$. 
		 	\end{remark}

 \bibliography{biblio}
\bibliographystyle{abbrv}
\end{document}